\documentclass[11pt]{article}
\usepackage{amsfonts}
\usepackage{amsthm}
\usepackage{amssymb}
\usepackage{graphicx, enumerate}
\usepackage{amsmath}
\usepackage{latexsym}
\usepackage{longtable}
\usepackage{tabularx}
\usepackage{amsmath}
\usepackage{amsfonts}
\usepackage{amsthm}
\usepackage{setspace}
\usepackage{graphicx}
\usepackage{thm-restate}
\usepackage{float}
\usepackage{rotating}
\usepackage{tikz}
\usepackage{verbatim}

\usepackage{caption}
\usepackage{subcaption}

\usepackage{soul}

\usepackage{mathtools}
\usepackage{amsmath,amssymb,amsbsy}
\usepackage{graphicx}
\usepackage{multirow}
\usepackage{amsfonts}
\usepackage{amsthm}
\usepackage{amssymb}
\usepackage{graphicx, enumerate}
\usepackage{amsmath}
\usepackage{latexsym}
\usepackage{longtable}
\usepackage{tabularx}
\usepackage{amsmath}
\usepackage{amsfonts}
\usepackage{amsthm}
\usepackage{setspace}
\usepackage{graphicx}
\usepackage{float}
\usepackage{rotating}
\usepackage{tikz}
\usepackage{verbatim}
\usepackage{soul}
\usepackage{xcolor}
\usepackage{cite}
\usepackage{changepage}

\usepackage[normalem]{ulem}

\usepackage[main=english,czech,slovak]{babel}

\definecolor{darkpastelgreen}{rgb}{0.01, 0.75, 0.24}
\definecolor{blue-violet}{rgb}{0.54, 0.17, 0.89}
\definecolor{cadmiumorange}{rgb}{0.93, 0.53, 0.18}
\definecolor{darkcoral}{rgb}{0.8, 0.36, 0.27}
\definecolor{darkorange}{rgb}{1.0, 0.55, 0.0}
\definecolor{darktangerine}{rgb}{1.0, 0.66, 0.07}
\definecolor{deepsaffron}{rgb}{1.0, 0.6, 0.2}

\newcommand{\bv}[1]{{\color{blue-violet}{#1}}}

\usepackage{soul}
\usepackage{ulem}

\usepackage{tikzsymbols}
\usepackage{geometry}
\newcommand{\gka}{\Gamma{\text{-KA}}}

\newcommand{\ams}{\mathrm{ZMS}_}
\newcommand{\agms}{\mathrm{ZMS}_{\Gamma}}

\newcommand{\zms}{\mathrm{ZMS}_}

\newcommand{\Spec}{\mathrm{Spec}}
\newcommand{\ms}{\mathrm{MS}_}
\newcommand{\ims}{\mathrm{MS}}
\newcommand{\gms}{\mathrm{MS}_{\Gamma}}

\newtheorem{theorem}{Theorem}[section]

\newtheorem{lemma}[theorem]{Lemma}
\newtheorem*{lemma*}{Lemma}

\newtheorem{oprb}[theorem]{Open Problem}
\newtheorem{obs}[theorem]{Observation}

\theoremstyle{definition}

\theoremstyle{definition}

\newtheorem*{prb*}{Open Problem}

\newtheorem{exm}[theorem]{Example}

\def\gr{\mathcal{G}}

\vspace{5cm}

\begin{document}


\title
{\Large \sc \bf {Note on zero-sum magic  squares on Abelian groups}
}
\date{}
\author{{{Sylwia Cichacz$^{1}$, Dalibor Froncek$^{2}$}}\\
\normalsize $^1$AGH University of Krakow, Poland, cichacz@agh.edu.pl\\
\normalsize $^2$University of Minnesota Duluth, U.S.A.,  dalibor@d.umn.edu
}

\maketitle

\begin{abstract}
Let $(\Gamma,+)$ be an Abelian group of order $n^2$. A $\Gamma$-magic square of order $n$ is an $n\times n$ array whose entries are pairwise distinct elements of $\Gamma$ such that all row sums, column sums, and the two main diagonal sums are equal to the same element $\mu \in \Gamma$, called the magic constant.

A combinatorial design is called $\Gamma$-additive if its point set is a subset of an Abelian group $\Gamma$ and every block has sum zero. If the point set coincides with $\Gamma$, the design is said to be strictly $\Gamma$-additive. Motivated by this notion, {we construct $\Gamma$-magic squares with magic constant $\mu=0$ whose rows, columns, and two main diagonals can be used as blocks of a strictly $\Gamma$-additive design. We call such a square {zero-sum $\Gamma$-magic square}.}

In this paper, we establish necessary and sufficient conditions for the existence of zero-sum $\Gamma$-magic squares.
\end{abstract}

\noindent
\textbf{Keywords:}  Magic squares,  Abelian group, $\Gamma$-additive designs

\noindent
\textbf{2000 Mathematics Subject Classification:} 05B15

\section{Introduction}\label{sec:intro}

Magic squares are among the oldest and most studied combinatorial objects. A \emph{magic square of order $n$},
denoted by $\ims(n)$, is an $n\times n$ array whose entries are the integers $1,2,\dots,n^2$ and such that the sum
of the entries in each row, each column, and the two main diagonals is equal to the same value, called the
\emph{magic constant}, namely $c=n(n^2+1)/2$.When $n=1$, the magic square is \emph{trivial}.
We state the theorem here for completeness. For an overview, see, e.g., Chapter 6 Section 33 in~\cite{handbook}.

\begin{theorem}[\cite{handbook}] \label{thm:msquare}
	There exists a  {non-trivial }{magic square} of side $n$ if and only if $n>2$.
\end{theorem}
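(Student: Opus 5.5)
We prove the two directions separately. Throughout, $n$ is the side length and the entries are $1,\dots,n^2$.

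\emph{Necessity.} We show that $n=2$ is impossible. Suppose $\begin{pmatrix} a & b\\ c & d\end{pmatrix}$ is a magic square. Equating the first row sum with the first column sum gives $a+b=a+c$, so $b=c$. This contradicts distinctness of the entries, so no magic square of order $2$ exists. The case $n=1$ is trivial by definition and is excluded.

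\emph{Sufficiency.} For every $n\ge 3$ we give an explicit construction, splitting into three cases.

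\textbf{Odd $n$.} We use the Siamese (de la Loubère) method. Equivalently, we use the closed form
\[
M_{i,j}=n\bigl((i+j+\tfrac{n+1}{2}) \bmod n\bigr)+\bigl((i+2j) \bmod n\bigr)+1,\qquad 0\le i,j<n .
\]
This writes $M=nA+B+J$, where $A$ and $B$ are orthogonal Latin squares built from linear forms over $\mathbb{Z}_n$. We then check the following.
\begin{itemize}
\item Orthogonality of $A$ and $B$ gives distinctness: the entries are exactly $1,\dots,n^2$.
\item Each row and column of $A$ and of $B$ is a permutation of $\{0,\dots,n-1\}$, because the coefficients of $i$ and $j$ in the forms $i+j$ and $i+2j$ are units mod $n$.
\item On the diagonal $i=j$ and the antidiagonal $j=n-1-i$, the forms become $2i$, $3i$, $0$ (constant) and $-i$ (up to additive constants). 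The constant case is the delicate one. The shift $\frac{n+1}{2}$ is chosen so that this constant equals the middle value $\frac{n-1}{2}$, whose $n$-fold sum is still $n(n-1)/2$.
\end{itemize}
The case $3\mid n$ needs extra care, since $3i$ is then not a permutation. I would handle it by choosing the linear forms so that the diagonal forms avoid the coefficient $3$. Alternatively I would simply cite the classical Siamese method, whose correctness for all odd $n$ is standard.

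\textbf{Doubly even $n$ ($4\mid n$).} Start from the natural filling $N_{i,j}=in+j+1$. Complement an entry $x\mapsto n^2+1-x$ exactly when the cell lies on a diagonal of one of the $4\times 4$ subblocks. Each row and column meets these complemented cells in exactly half its positions, arranged in symmetric pairs. A direct count then shows every line sum equals $n(n^2+1)/2$. Distinctness holds because complementation is an involution that maps the set of marked positions onto itself under the central reflection.

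\textbf{Singly even $n=2m$, $m$ odd, $m\ge 3$.} This is the main obstacle. I would use the LUX method of Conway. Take an odd magic square of order $m$, from the first case, on entries $0,\dots,m^2-1$. Replace each cell of value $k$ by a $2\times 2$ block with entries $4k+1,\dots,4k+4$, arranged in one of three patterns L, U, X according to Conway's layout:
\begin{itemize}
\item $\frac{m+1}{2}$ rows of L;
\item one row of U;
\item $\frac{m-3}{2}$ rows of X;
\item finally, swap the middle U with the L above it.
\end{itemize}
The verification has two parts. First, every pair of rows and every pair of columns of $2\times 2$ blocks contributes $4\cdot(\text{line sum of the odd square})+\text{const}$. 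Second, the within-block offsets balance on each individual row and column. The bookkeeping of the offset patterns, especially on the two diagonals, is where most of the casework lies. Alternatively, one may cite \cite{handbook}, as the paper does.

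Combining the three cases yields a magic square for every $n>2$.
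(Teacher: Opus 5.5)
The paper gives no proof of this theorem; it simply cites the Handbook. Your necessity argument, the doubly even construction and the LUX bookkeeping are fine. The odd case, however, contains a real error.

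\textbf{The odd-order formula.} Your closed form is \emph{not} equivalent to the de la Loub\`ere method. As written, the second coordinate on the main diagonal is $3i \bmod n$. When $3\mid n$ its values are the residues $\equiv 0 \pmod 3$, each taken three times, so the sum is $n(n-1)/2-n$ and the main diagonal falls short by $n$. Already for $n=3$ your formula produces the rows $7,3,5$; $2,4,9$; $6,8,1$. All rows, all columns and the antidiagonal sum to $15$, but the main diagonal sums to $7+4+1=12$.

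\textbf{The proposed remedy cannot work.} You suggest choosing the linear forms so that the diagonal forms avoid the coefficient $3$. If $3\mid n$, every unit mod $n$ is $\equiv\pm1\pmod 3$. So for unit coefficients $a_1,a_2$, one of $a_1+a_2$ and $a_1-a_2$ is divisible by $3$, hence is not a unit. In any linear construction of this kind, some diagonal form of each Latin square is therefore necessarily non-bijective.

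\textbf{Propagation to the singly even case.} The defect carries over to $n=2m$ with $3\mid m$. The LUX diagonal computation uses that the base square of order $m$ has magic diagonals. For $n=6$, the base square built from your formula has main-diagonal sum $9$ instead of $12$ on entries $0,\dots,8$, so the big main diagonal is off by $8\cdot 3=24$.

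\textbf{The fix.} Use the same device you already applied to the constant antidiagonal form: keep the degenerate form but choose its additive constant correctly. Take $B_{i,j}=(i+2j+1)\bmod n$.
\begin{itemize}
\item Rows, columns and orthogonality are unchanged.
\item The antidiagonal form becomes $-i-1$, which is still a permutation.
\item The main-diagonal form becomes $3i+1$. When $3\nmid n$ it is a permutation. When $3\mid n$ its values are the residues $\equiv 1 \pmod 3$ in $\{0,\dots,n-1\}$, each taken three times, with total $3\cdot\frac{n}{3}\cdot\frac{n-1}{2}=n(n-1)/2$, as required.
\end{itemize}
This is exactly the Siamese square. With it your three-case argument goes through, because the LUX step works with any odd magic base square on $0,\dots,m^2-1$. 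The within-block offsets give $5$ per row. Each column contains $\frac{m+1}{2}$ L's, one U and $\frac{m-3}{2}$ X's, giving $5m$ per column. After the swap, both diagonals also give $5m$.
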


A natural generalization of magic squares is given by \emph{magic rectangles}. An $m\times n$ magic rectangle
is an array with entries $1,2,\dots,mn$ such that all row sums are equal to the \emph{row constant} $r=n(mn+1)/2$ {and} all column sums are equal to the \emph{column constant} $c=m(mn+1)/2$. 

Magic squares and rectangles admit many further generalizations. One may impose restrictions on the entries, 
for instance, requiring them to be the elements
of an Abelian group \cite{CicJaco,Cichacz-Hincz-2,CicFro,Evans,Sun-Yihui}. Let $\Gamma$ be an Abelian group of order $n^2$. A \emph{$\Gamma$-magic square},
denoted by $\gms(n)$, is an $n\times n$ array whose entries are the elements of $\Gamma$, each appearing
exactly once, such that all row, column, main diagonal and antidiagonal sums are equal to the same element  $\mu\in\Gamma$,
called the \emph{magic constant}.

To avoid confusion between the order of the group and the size of the array, we shall refer to $n$ as the
\emph{side} of the square. Thus, $\ms{\Gamma}(n)$ denotes a magic square of side~$n$.  It was proved recently the following.
\begin{theorem}[Cichacz, Froncek \cite{CicFro}]\label{thm:main}
Let $\Gamma$ be an Abelian group of order $n^2$. There exists a non-trivial $\gms(n)$  if and only if $n>2$.
\end{theorem}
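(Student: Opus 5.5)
Since Theorem~\ref{thm:main} is quoted from~\cite{CicFro}, I would reprove it along the lines of that kind of construction. The plan has three parts: necessity, reduction to cyclic factors via a product construction, and base cases.

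\textbf{Necessity.} For $n=1$ the square is trivial. For $n=2$ we have $|\Gamma|=4$. Write the array as $\begin{pmatrix} a&b\\ c&d\end{pmatrix}$. The first row and the first column give $a+b=a+c$, so $b=c$. This contradicts distinctness, so no $\gms(2)$ exists.

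\textbf{Sufficiency: product reduction.} Suppose $\Gamma$ is an Abelian group of order $n^2$ with $n>2$. I would write $\Gamma\cong\Gamma_1\oplus\Gamma_2$ with $|\Gamma_1|=n_1^2$ and $|\Gamma_2|=n_2^2$, where $n=n_1n_2$. Given a $\ms{\Gamma_1}(n_1)$ $A$ and a $\ms{\Gamma_2}(n_2)$ $B$, I would form the Kronecker-type composition with entry $(A_{ij},B_{kl})$ in block position $(k,l)$ and inner position $(i,j)$. Entries are distinct, and every row, column and diagonal sum equals $(n_2\mu_A,n_1\mu_B)$. So it suffices to handle indecomposable cases, plus the awkward case where a factor would have side $2$.

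\textbf{Sufficiency: cyclic and elementary pieces.} By the structure theorem, $\Gamma$ has a subgroup chain $H<\Gamma$ with $\Gamma/H$ and $H$ both of order $n$ (for example, take half the exponents in each Sylow component), but $\Gamma$ need not split. I would therefore use the classical method of two orthogonal diagonal Latin squares. Choose a subgroup $H$ of order $n$ and a transversal $T$ of $H$ in $\Gamma$, with $|T|=n$. Put entry $t_{L_1(i,j)}+h_{L_2(i,j)}$, where $L_1$ and $L_2$ are orthogonal Latin squares of order $n$ that are also diagonal, meaning each symbol appears once on each main diagonal. Every element of $\Gamma$ appears exactly once. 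Each line sum is $\sum T+\sum H$, since each symbol of each square appears once in every row, column and diagonal. Orthogonal diagonal Latin squares exist for all $n\neq 2,3,6$.

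\textbf{Exceptional sides and the main obstacle.} The remaining work is sides $n=3$ and $n=6$, which I expect to be the main obstacle. For $n=3$ the groups are $\mathbb{Z}_9$ and $\mathbb{Z}_3^2$. The classical $3\times 3$ square reduced mod $9$ works for $\mathbb{Z}_9$. For $\mathbb{Z}_3^2$, a direct search should give a square, e.g.\ one of the form $(i+j,\ i+2j)$ corrected on the diagonals. For $n=6$, the Sylow $2$-part has order $4$, so the product trick would need a side-$2$ factor, which is impossible. Here I would attempt explicit constructions for the groups of order $36$ (four isomorphism types), starting from the integer magic square of order $6$ mapped into $\mathbb{Z}_{36}$ and then via a group-specific bijection. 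Whether such a map preserves equal line sums in the noncyclic cases is the step I am least sure of; if it fails, I would fall back on a computer-aided or hand-built square for each group.
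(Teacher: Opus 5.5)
The paper only quotes this theorem from \cite{CicFro}, so there is no in-paper proof to compare with. Judged on its own, most of your argument is sound.

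\textbf{What works.} Necessity is fine. The transversal construction is correct: $(t,h)\mapsto t+h$ is a bijection $T\times H\to\Gamma$, orthogonality makes each pair occur once, and diagonality makes all $2n+2$ line sums equal to $\sum T+\sum H$. With the spectrum theorem for orthogonal diagonal Latin squares, this settles every $n\neq 2,3,6$, and the Kronecker reduction becomes unnecessary. For $n=3$ no search is needed: your array $(i+j,i+2j)$ over $Z_3$ already has all eight line sums equal to $(0,0)$ with no correction, and so does $(i,j)$. Reducing integer squares mod $9$ and mod $36$ handles $Z_9$ and $Z_{36}$.

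\textbf{The gap.} For $n=6$ you give no construction for $Z_2\oplus Z_{18}$, $Z_3\oplus Z_{12}$ and $Z_6\oplus Z_6$. The ``group-specific bijection'' step has no content. For any integer square $M$ and any $\Gamma$-magic square $G$ of side $6$, the map $G\circ M^{-1}$ is such a bijection, so finding one is exactly the original problem. Since no isomorphism $Z_{36}\to\Gamma$ exists, nothing guarantees that a given relabelling respects sums.

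\textbf{The missing idea.} The Sylow $2$-part $\Gamma_2$, of order $4$, does not need to carry a magic square of its own. Write $\Gamma\cong\Gamma_2\oplus\Gamma_3$ with $|\Gamma_3|=9$. Inflate a $\Gamma_3$-magic square $A$ of side $3$ into $2\times 2$ blocks, so every line has $\Gamma_3$-sum $2\mu_A$. Then fill each block with the four elements of $\Gamma_2$ so that the $\Gamma_2$-line sums are constant.

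\emph{Case $\Gamma_2=Z_2\oplus Z_2$.} Lemma~\ref{lem:SWL} with $\Gamma_0=\Gamma_3$, $m=3$, $k=2$ gives the square directly. It needs a $3\times 4$ Kotzig array, which exists by Theorem~\ref{thm:Kotzig} because $Z_2\oplus Z_2\in\gr$.

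\emph{Case $\Gamma_2=Z_4$ (the group $Z_3\oplus Z_{12}$).} That Kotzig array does not exist, but Conway's LUX pattern works. Use the block layout $\begin{smallmatrix}L&L&L\\ L&U&L\\ U&L&U\end{smallmatrix}$ with $L=\begin{smallmatrix}3&0\\ 1&2\end{smallmatrix}$ and $U=\begin{smallmatrix}0&3\\ 1&2\end{smallmatrix}$. The resulting $6\times 6$ array is a permutation of $\{0,1,2,3\}$ on each block, and every row, column and diagonal has integer sum $9$. Take it mod $4$ as the $Z_4$-coordinate, alongside the inflated $A$. This gives a $Z_4\oplus\Gamma_3$-magic square with constant $(1,2\mu_A)$.

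Without these cases, your proof covers only $n\neq 6$ and the cyclic group $Z_{36}$.
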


The identity element {of an Abelian group} $\Gamma$ will be denoted by $0$. A subset $S$ of $\Gamma$ is called a \emph{zero-sum subset} if $\sum_{g\in S} g = 0$.
Zero-sum problems play a significant role in several areas of combinatorics
\cite{BurattiMerolaNakic2025,CicZ,CicJaco,CicSur,CaggegiFalconePavone2017,ErdGinZiv,Skolem57}.
One notable example is provided by \emph{additive combinatorial designs}.
According to~\cite{CaggegiFalconePavone2017} and its recent extension
in~\cite{BurattiMerolaNakic2025}, a combinatorial design is said to be
\emph{$\Gamma$-additive} if its point set is a subset of an Abelian group $\Gamma$
and each block is zero-sum. If the point set coincides with $\Gamma$, the design
is called \emph{strictly $\Gamma$-additive}.
Therefore, if a $\ms{\Gamma}(n)$ has the magic constant $\mu=0$, then it naturally defines a strictly $\Gamma$-additive
partial linear space whose point set is $\Gamma$ and whose blocks are given by the rows, columns,
main diagonals and antidiagonals of the square.
In this sense, a $\Gamma$-magic square with zero magic constant can be viewed as a strictly
$\Gamma$-additive combinatorial structure.

Hence, a $\ms{\Gamma}(n)$ with the magic constant $\mu=0$ will be called a \emph{zero-sum $\ms{\Gamma}(n)$} and denoted by $\ams{\Gamma}(n)$.

While nontrivial $\Gamma$-magic squares exist for all $n>2$, the zero-sum requirement $\mu=0$ is far from automatic when $n$ is even. In particular, the structure of involutions in $\Gamma$ plays a crucial role. Recall that an element $\iota\in\Gamma$ of order 2 (i.e., $\iota\neq 0$ and $2\iota=0$) is called an \emph{involution}. 
For convenience, let $\gr$ denote the set consisting of all Abelian groups that are
of odd order (and thus have no involution) or contain more than one involution.

In this paper we give sufficient and necessary conditions for existence of a $\agms(n)$.
\begin{restatable}{theorem}{CoPthm}
\label{thm:mainams}
Let $\Gamma$ be an Abelian group of order $n^2>1$. There exists a non-trivial $\agms(n)$  if and only if $n>2$ and $\Gamma\in\gr$.
\end{restatable}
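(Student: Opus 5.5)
Necessity comes first. If $n\le 2$, Theorem~\ref{thm:main} already rules out a non-trivial $\gms(n)$, so certainly no $\agms(n)$ exists. Now suppose $n>2$ and $\Gamma\notin\gr$. Then $\Gamma$ has even order and exactly one involution $\iota$. In that situation the sum of all elements of $\Gamma$ is $\iota$, since every other element cancels against its inverse. The $n$ rows of a $\Gamma$-magic square partition $\Gamma$, so $n\mu=\sum_{g\in\Gamma}g=\iota$. If $\mu=0$ we get $\iota=0$, a contradiction. Hence $\Gamma\in\gr$ is necessary. For odd $|\Gamma|$, or for $\Gamma$ with more than one involution, the total sum is $0$, consistent with $n\mu=0$.

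For sufficiency I take $\Gamma\in\gr$ and $n>2$. The plan is to start from the $\gms(n)$ provided by Theorem~\ref{thm:main} and translate it. Adding a fixed $g$ to every entry again gives a $\Gamma$-magic square, and it changes the constant to $\mu+ng$. So it suffices to find $g$ with $ng=-\mu$, i.e.\ to show $\mu\in n\Gamma$.

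\textbf{Odd $n$.} Here $|\Gamma|=n^2$ is odd, so multiplication by $n$ is not a bijection in general, since $\gcd(n,n^2)\ne1$. So the translation trick is not automatic. Instead I would fix the constant by choosing the construction. The cleanest route is a direct construction: write $\Gamma$ as a product of cyclic groups, pair the factors so that $\Gamma\cong A\times B$ with $|A|=|B|=n$ where possible, and use a Latin-square (de la Loub\`ere / orthogonal diagonal Latin squares) construction $M_{ij}=(\alpha_{ij},\beta_{ij})$ with $\alpha,\beta$ orthogonal diagonal Latin squares over $A$ and $B$. Every line then contains each element of $A$ once and each element of $B$ once. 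Its sum is $(\sum A,\sum B)=(0,0)$ because $A$ and $B$ have odd order. When $\Gamma$ does not split as $A\times B$ with $|A|=|B|=n$ (e.g.\ $\mathbb{Z}_{p^3}\times\mathbb{Z}_p$), I would use a subgroup $H$ of order $n$ with quotient $\Gamma/H$ of order $n$ and a transversal. I would choose the transversal so that the sum of coset representatives along each line is $0$, for instance a symmetric transversal $T=-T$, which exists in odd order.

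\textbf{Even $n$.} Here the group has at least three involutions, so $\sum\Gamma=0$ and the obstruction vanishes. I would again aim for $\Gamma\cong A\times B$-type or $H$-with-transversal constructions in which every line meets each coset of a subgroup $H$ of order $n$ exactly once and meets each element of $H$ once. Each line sum is then $\sum H+\sum T$ for a suitable transversal $T$. I would choose $H$ to contain at least two involutions, so that $\sum H=0$ (or pair $H$ so the total vanishes), and choose $T$ zero-sum. For $n\equiv2\pmod4$ diagonal orthogonal Latin squares of order $2$ are missing, which is exactly where the main difficulty lies. There I would follow the adjustment methods of \cite{CicFro}, checking that the swaps preserve the zero constant, or handle small cases such as $n=6$ by explicit tables.

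The main obstacle is the even case, and specifically making the constant exactly $0$ rather than merely constant. The first thing I would try is to reduce to the proof of Theorem~\ref{thm:main} and show that its constant is $\mu=\sum H+\sum T$, which can be forced to $0$ exactly when $\Gamma\in\gr$.
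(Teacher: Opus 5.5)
Your necessity argument is correct and is the paper's own: there is no $\gms(2)$, and Observation~\ref{gr} handles $\Gamma\notin\gr$.

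The sufficiency part has a genuine gap in the even case, and it is not the one you point to. Both of your schemes (the $A\times B$ scheme and the $H$-plus-transversal scheme) make every row, column and diagonal meet each coset of a subgroup $H$ of order $n$ exactly once. So every line sum is congruent modulo $H$ to $\sum_{x\in\Gamma/H}x$. If $n\equiv 2\pmod 4$, then $\Gamma/H$ has order $n$, so its Sylow $2$-subgroup is $Z_2$ and it has a unique involution. That sum is therefore nonzero, and no line sum can vanish, whatever $H$, transversal $T$ or Latin squares you choose.

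The obstruction is arithmetic, not a shortage of orthogonal diagonal Latin squares; those exist for every order except $2,3,6$ \cite{BCMMPW}. Moreover, $\Gamma\in\gr$ with $n\equiv2\pmod4$ forces $\Gamma\cong Z_2\oplus Z_2\oplus K'$ with $|K'|$ odd. This is an infinite family, so unspecified ``adjustment methods'' or explicit tables do not close it.

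Even for $4\mid n$ your selection rule is the wrong one. What you need is that $\Gamma/H$ be zero-sum, so that $T$ can be adjusted to $\sum T=-\sum H$. Containing two involutions in $H$ is not enough. For example, in $Z_2\oplus Z_8$ the subgroup $\{0,1\}\times\{0,4\}$ has three involutions but quotient $Z_4$, so no transversal works.

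Your opening translation plan is never carried out, and it fails for arbitrary squares. In Figure~\ref{Z_2 x Z_8} we have $-\mu=(0,2)\notin 4\Gamma$.

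The missing idea is the paper's product reduction:
\begin{enumerate}
\item If an odd prime $p$ divides $n$, write $\Gamma\cong H\oplus K$ with $|H|=p^{2\lambda}$ and $K\in\gr$.
\item Take a $\ams{H}(p^\lambda)$ from Theorem~\ref{thm:main-odd}.
\item Take a $K$-Kotzig array of size $p^\lambda\times|K|$ with zero column sums. For odd $p^\lambda$ it exists precisely because $K\in\gr$ (Theorem~\ref{thm:Kotzig}).
\item Combine the two via Lemma~\ref{lem:SWL2}.
\end{enumerate}
This is where the hypothesis $\Gamma\in\gr$ is actually used for sufficiency. It covers $n\equiv 2\pmod 4$ without any Latin squares. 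Only $n=2^s$ then remains, and the paper settles it by the explicit doubling constructions of Section~\ref{sec:n=2tos}.

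Your odd case, by contrast, is a valid and more uniform alternative for odd $n\ge 5$. Orthogonal diagonal Latin squares over a subgroup of order $n$, together with a symmetric transversal, give line sums $\sum H+\sum T=0$. However, no diagonal Latin square of order $3$ exists, so $Z_9$ and $Z_3\oplus Z_3$ still need the explicit squares of Lemma~\ref{primep}.
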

	The paper is organized as follows.  Section~\ref{sec:defs and related}  contains preliminary lemmas and observations needed to build zero-sum $\Gamma$-magic squares. In Section~\ref{constructions}, we present the main constructions.  We conclude the paper with some final remarks and open questions in Section~\ref{sec:conclusion}.

 
\section{Related results}\label{sec:defs and related}

 In~\cite{CicZ}, Cichacz introduced concept of a $\Gamma$-\textit{Kotzig array}. 	A \emph{$\Gamma$-Kotzig array} $\gka(j,k)$ where $\Gamma$ is an Abelian group of order $k$  is
	a $j\times k$ grid where each row is a permutation of elements of $\Gamma$ and each column has the same sum.    By adding to all elements in each row the inverse of its first entry we get all  elements in the first column equal  $0$ and therefore  all  column sums will equal to $0$. 

It was proved the following.
\begin{theorem}[Cichacz \cite{CicZ}]\label{thm:Kotzig}
A $\Gamma$-Kotzig array of size $j \times k$ exists if and only if
$j>1$ and $j$ is even or $\Gamma\in \gr$. Moreover, whenever a $\Gamma$-Kotzig array exists, then there exists one with the column sums equal to $0$.
\end{theorem}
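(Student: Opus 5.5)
The statement has two directions, and I would handle them separately. Throughout, I read the condition as ``$j>1$ and ($j$ is even or $\Gamma\in\gr$)''. The ``moreover'' part is already explained in the paragraph before the theorem. Subtracting from every row its first entry keeps each row a permutation of $\Gamma$ and keeps all column sums equal to one another. It also makes the first column entirely $0$, so the common column sum becomes $0$.

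\textbf{Necessity.} If $j=1$ and $k>1$, the columns are single entries. These are pairwise distinct, so they cannot all have the same sum. Now suppose $j$ is odd and $\Gamma\notin\gr$, so $\Gamma$ has even order and exactly one involution $\iota$. Let $\sigma=\sum_{g\in\Gamma}g$. The elements of $\Gamma$ pair off with their inverses except for $0$ and $\iota$, so $\sigma=\iota$. If every column has sum $s$, then summing all entries of the array in two ways gives
\[ ks \;=\; j\sigma \;=\; j\iota \;=\; \iota,\]
using that $j$ is odd. On the other hand $k=|\Gamma|$ annihilates every element of $\Gamma$, so $ks=0\neq\iota$, a contradiction.

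\textbf{Sufficiency, $j$ even.} Fix any listing $g_1,\dots,g_k$ of $\Gamma$. Use the row $(g_1,\dots,g_k)$ and the row $(-g_1,\dots,-g_k)$; each is a permutation of $\Gamma$ and each column of this pair sums to $0$. Stacking $j/2$ copies of this pair gives a $\Gamma$-Kotzig array with all column sums $0$.

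\textbf{Sufficiency, $j$ odd and $\Gamma\in\gr$.} Since pairs of rows as above can always be appended, it suffices to construct a $3\times k$ array with zero column sums. I would take the rows to be
\[ (x)_{x\in\Gamma},\qquad (\varphi(x))_{x\in\Gamma},\qquad (-x-\varphi(x))_{x\in\Gamma}\]
for a bijection $\varphi$ of $\Gamma$. The columns then sum to $0$ automatically. The third row is a permutation of $\Gamma$ exactly when $x\mapsto x+\varphi(x)$ is a bijection, that is, when $\varphi$ is a complete mapping of $\Gamma$.
\begin{itemize}
\item If $|\Gamma|$ is odd, take $\varphi=\mathrm{id}$. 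Then $x\mapsto 2x$ is a bijection because $2$ is invertible modulo the exponent of $\Gamma$.
\item In general, invoke Hall's theorem (the abelian case of Hall--Paige): a finite abelian group has a complete mapping if and only if its Sylow $2$-subgroup is trivial or non-cyclic. A Sylow $2$-subgroup is non-cyclic precisely when $\Gamma$ has more than one involution, so this condition is exactly $\Gamma\in\gr$.
\end{itemize}

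The main obstacle is producing the complete mapping when $\Gamma$ has a non-cyclic Sylow $2$-subgroup. If I wanted to avoid quoting Hall's theorem, I would first decompose $\Gamma=P\times O$ with $P$ the Sylow $2$-subgroup and $O$ of odd order. On $O$ I would use $x\mapsto x$. The task then reduces to $P$, since a product of complete mappings on the factors is a complete mapping of the product. I would write $P$ as $\mathbb{Z}_{2^a}\times\mathbb{Z}_{2^b}\times\cdots$ with at least two factors, and build a complete mapping explicitly, inductively on the number of cyclic factors. I expect this explicit $2$-group construction to be the hardest step.
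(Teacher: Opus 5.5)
Your proof is correct, but the paper does not actually prove this theorem. It imports the existence statement from \cite{CicZ}, where it is obtained as an application of results on partitions of $\Gamma$ into zero-sum subsets of equal size. The only argument the paper supplies is the row-normalization remark for the ``moreover'' clause, which you reproduce.

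You give a direct argument instead, in three parts:
\begin{itemize}
\item Necessity uses the same ``sum of all elements is the unique involution'' computation the paper uses in Observation~\ref{gr}, with $ks=0$ playing the role of $n\mu=0$.
\item The even case follows by stacking pairs of rows $(g)$ and $(-g)$.
\item The odd case reduces to $j=3$.
\end{itemize}
The key point in the odd case is this. Reorder the columns so the first row is a fixed listing of $\Gamma$. Then a $3\times k$ array with zero column sums is exactly the same thing as a complete mapping of $\Gamma$.

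So the odd-$j$ half of the theorem is equivalent to the abelian case of the Hall--Paige theorem. Citing Hall therefore loses nothing, and your sketched explicit construction for $2$-groups is optional. Your route buys brevity and makes clear where the real content lies. The route through \cite{CicZ} yields more, namely zero-sum partitions of $\Gamma$ into sets of any admissible size, not just the three-row case.

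One minor edge case: the stated ``only if'' fails for $j=k=1$ (trivial $\Gamma$). Restricting the $j=1$ necessity argument to $k>1$, as you did, is the right reading.
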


The following two lemmas play an important role in our reduction of the problem.

\begin{lemma}[Cichacz, Froncek~\cite{CicFro}]
	Let $\Gamma$ be an Abelian group of order $n^2$. 
	Let  $\Gamma\cong \Gamma_0\oplus H$ for some groups $\Gamma_0$ with $|\Gamma_0|=m^2$, $m>1$ and $H$ with $|H|=k^2$. If there exists a $\Gamma_0$-magic square   $\ms{\Gamma_0}(m)$   with the  magic sum $\delta$ and {an $H$-Kotzig array of size $m\times k^2$}, then there exists a $\Gamma$-magic square $\ms{\Gamma}(n)$ with the magic sum $(k\delta,0)$.\label{lem:SWL}
\end{lemma}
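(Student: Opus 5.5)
We need to prove Lemma \ref{lem:SWL}: from $\ms{\Gamma_0}(m)$ with magic sum $\delta$ and an $H$-Kotzig array of size $m\times k^2$, build $\ms{\Gamma}(n)$ with $n=mk$ and magic sum $(k\delta,0)$. The plan is a blow-up (Kronecker-type) construction. Each cell of the $m\times m$ square $A=(a_{ij})$ is replaced by a $k\times k$ block whose first coordinates all equal $a_{ij}$. The second coordinates run through $H$ and are arranged so that each row, column and diagonal of the big square collects a zero-sum multiset of $H$-elements.

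\textbf{Step 1: normalize the Kotzig array.} By Theorem~\ref{thm:Kotzig} (or the row-shifting remark before it) we may assume the $H$-Kotzig array $K$ of size $m\times k^2$ has all column sums $0$. Denote its rows by $\kappa_1,\dots,\kappa_m$; each is a permutation of $H$.

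\textbf{Step 2: choose the block assignment.} The $k^2$ entries of row $\kappa_t$ are split into $k$ consecutive groups of $k$ elements, $\kappa_t=(G_{t,1},\dots,G_{t,k})$. These groups are indexed by the columns $c=1,\dots,k^2$ of $K$, so column $c$ lies in group $\lceil c/k\rceil$ at position $c-k(\lceil c/k\rceil-1)$.

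To make the big rows zero-sum, a block row $i$ of the big square (the $k$ big rows passing through blocks $(i,1),\dots,(i,m)$) should use $m$ different rows of $K$, one per block. The $s$-th small row of these big rows should then receive the $m$ entries $\kappa_{\sigma_i(j)}$ from one common column of $K$, which sums to $0$.

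So I use a Latin square $L$ of order $m$ and put into block $(i,j)$ the row $\kappa_{L(i,j)}$, arranged as a $k\times k$ array $B_{L(i,j)}$ whose $(s,u)$ entry is $\kappa_{L(i,j)}[(s-1)k+u]$. Then the $s$-th row inside block row $i$ picks, across the $m$ blocks, for each fixed $u$ the $H$-entries of a single column $(s-1)k+u$ of $K$ over all $m$ rows. These sum to $0$ in $H$, and the $\Gamma_0$ part sums to $k\cdot(\text{row sum of }A)=k\delta$.

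Columns are harder, because the transpose arrangement of $B_t$ does not read the same column index. I would fix this by making the block arrangement depend on the block column parity or by using $B_t$ in row-wise form for rows and a second array $K'$ for columns. The cleaner option is to arrange block $(i,j)$ as $B_t$ with entry $(s,u)\mapsto \kappa_t[\phi(s,u)]$ for a bijection $\phi$ that is symmetric in a suitable sense: for example, a Latin-square pairing so that a fixed column index $c$ appears at $(s,u)$ both when reading along rows and along columns. This is compatible with the column sums of $K$ only if the same $\phi$ is used in every block. In that case a small column $u$ of block column $j$ reads, over blocks $i=1..m$, positions $\phi(s,u)$ for all $s$ and rows $L(i,j)$ ranging over all $t$. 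Each fixed $s$ then gives a full column of $K$, so the sum is $0$. This works for rows and columns with any fixed $\phi$ once $L$ is Latin.

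\textbf{Step 3: diagonals, the main obstacle.} The main diagonal of the big square passes through blocks $(i,i)$ at positions $(s,s)$. Its $H$-sum is $\sum_i\sum_s \kappa_{L(i,i)}[\phi(s,s)]$, and the $\Gamma_0$-sum is $k\delta$ automatically. For the $H$-part to vanish I need $L$ to have a transversal on the main diagonal, i.e. to be diagonal Latin, so that $\{L(i,i)\}$ runs over all rows and each fixed $s$ again yields a full column sum. The antidiagonal is treated analogously: it passes through $(i,m+1-i)$ at $(s,k+1-s)$, so $L$ also needs its antidiagonal to be a transversal. Diagonal Latin squares exist for all $m\ne 2,3$. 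For $m=3$ (and $m=2$, which is impossible anyway) I expect to instead exploit that $A$ itself is magic: replace $L(i,j)$ by the index structure from $A$'s construction, or use a cyclic $L(i,j)=i+j$ and fix the diagonals by permuting $\phi$ within the diagonal blocks. Finally, distinctness holds because each block carries a distinct $a_{ij}$ with a full permutation of $H$, so every element of $\Gamma_0\oplus H$ appears exactly once.
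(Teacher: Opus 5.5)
The paper does not prove this lemma; it quotes it from \cite{CicFro}, so I judge your argument on its own merits.

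\textbf{What works.} Your final scheme gives block $(i,j)$ first coordinate $a_{ij}$ and second coordinates $\kappa_{L(i,j)}[\phi(s,u)]$, with one bijection $\phi$ shared by all blocks.
\begin{itemize}
\item Distinctness, the first coordinate $k\delta$ of every line, and the vanishing $H$-part of all rows and columns follow from $L$ being Latin and $K$ having zero column sums.
\item For $m\ge 4$, a diagonal Latin square (these exist for every order except $2$ and $3$) also handles both diagonals.
\item The case $m=2$ is vacuous.
\end{itemize}

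\textbf{The gap: $m=3$.} This is a real case of the lemma, and the paper needs it. For $n=6$ with $\Gamma_0\cong Z_3\oplus Z_3$ and $H\cong Z_2\oplus Z_2$, the proof of Theorem~\ref{thm:mainams} applies the lemma with exactly $m=3$. No Latin square of order $3$ has both diagonals as transversals. So one big diagonal passes through three blocks that carry at most two distinct rows of $K$, and the zero column sums of $K$ say nothing about its $H$-sum. For example, with $L(i,j)=i+j \pmod 3$ the antidiagonal sum is $3\sum_{s}\kappa_t[\phi(s,k+1-s)]$ for the single row $t$ on which $L$ is constant along its antidiagonal. This is nonzero in general.

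\textbf{Why your remedies do not close it.}
\begin{itemize}
\item $A$ is an arbitrary $\Gamma_0$-magic square, so there is no index structure to borrow from it.
\item Changing $\phi$ only in the diagonal blocks breaks the row and column argument. Small row $s$ of every block in a block row must read the same set of columns of $K$, and likewise for small columns. The only bijection preserving all row-sets and column-sets of $\phi$ is $\phi$ itself.
\end{itemize}

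\textbf{What is missing, and one repair.} You need an input beyond the column sums of $K$, namely an arithmetic property of $H$. Keep $L(i,j)=i+j \pmod 3$; rows, columns and the main diagonal are then fine, since $i\mapsto 2i$ is a bijection mod $3$. Use the remaining freedom in $\phi$ to send the positions $(s,k+1-s)$ to the columns where row $t$ of $K$ takes the values of a zero-sum $k$-subset $S\subseteq H$. Such an $S$ exists in these cases:
\begin{itemize}
\item $k$ odd: take $0$ together with $(k-1)/2$ pairs $\{x,-x\}$.
\item $k$ even and $H$ not elementary abelian: take $k/2$ pairs $\{x,-x\}$ with $x\neq -x$.
\item $H\cong Z_2^{2r}$ with $r\ge 2$: take a subgroup of order $k$.
\end{itemize}
The remaining case is $H\cong Z_2\oplus Z_2$, which has no zero-sum $2$-subset, and there $3x=x$. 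This is precisely the $n=6$ case above, so it needs a separate direct construction. One exists, for instance from $2\times 2$ blocks $\left(\begin{smallmatrix}0&\sigma_{ij}\\ \tau_{ij}&\sigma_{ij}+\tau_{ij}\end{smallmatrix}\right)$ with suitably chosen $3\times 3$ arrays $\sigma,\tau$ over $H\setminus\{0\}$.
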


\begin{lemma}[Cichacz, Froncek~\cite{CicFro}]\label{lem:DWL}
	Let $\Gamma$ be an Abelian group of order $n^2$ such that  $\Gamma\cong Z_{k^2m_1}\oplus Z_{m_2}$ for  $k>2$. If there exists a $Z_{m_1}\oplus Z_{m_2}$-magic square   $\ms{Z_{m_1}\oplus Z_{m_2}}(n/k)$   with the  magic sum $\delta=(\delta_1,\delta_2)$, then there exists a $\Gamma$-magic square $\ms{\Gamma}(n)$ with the magic sum  $\mu=(k^3\delta_1+n(k^2-1)/2,k\delta_2)$.
\end{lemma}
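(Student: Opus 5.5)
The plan is to build the $\Gamma$-magic square by inflating the given $\ms{Z_{m_1}\oplus Z_{m_2}}(n/k)$ with a classical integer magic square of side $k$. Put $t=n/k$, so that $t^2=m_1m_2$ and $n^2=k^2m_1m_2$. Let $A=(a_{ij},b_{ij})_{i,j=1}^{t}$ be the given magic square with magic sum $\delta=(\delta_1,\delta_2)$. Since $k>2$, Theorem~\ref{thm:msquare} gives an ordinary magic square of side $k$. Subtracting $1$ from every entry, we get a square $M=(M_{uv})_{u,v=1}^{k}$ with entries $0,1,\dots,k^2-1$. All its row, column, main diagonal and antidiagonal sums then equal $k(k^2+1)/2-k=k(k^2-1)/2$.

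\textbf{Identifying $\Gamma$ with pairs.} For $a\in Z_{m_1}$ the element $k^2a\in Z_{k^2m_1}$ is well defined, because changing $a$ by a multiple of $m_1$ changes $k^2a$ by a multiple of $k^2m_1$. Every $x\in Z_{k^2m_1}$ can be written uniquely as $x=k^2a+c$ with $a\in Z_{m_1}$ and $c\in\{0,1,\dots,k^2-1\}$: take $c$ to be the residue of $x$ modulo $k^2$ and $a=(x-c)/k^2 \bmod m_1$. Hence the map
\[
(a,b,c)\longmapsto (k^2a+c,\;b)
\]
is a bijection from $Z_{m_1}\times Z_{m_2}\times\{0,\dots,k^2-1\}$ onto $\Gamma$. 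Checking this carefully is the only delicate point. It is also why the first coordinate of the magic sum carries the factor $k^3$ rather than $k$: each entry $a_{ij}$ enters through $k^2a_{ij}$.

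\textbf{Construction.} Form the $n\times n$ array $B$ as a $t\times t$ array of $k\times k$ blocks. The block in block-position $(i,j)$ has entry
\[
\bigl(k^2a_{ij}+M_{uv},\;b_{ij}\bigr)
\]
in its position $(u,v)$. Every pair $(a_{ij},b_{ij})$ occurs exactly once in $A$ and every $c$ occurs exactly once in $M$. By the bijection above, every element of $\Gamma$ therefore appears exactly once in $B$.

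\textbf{Sums.} Consider a row of $B$. It meets block-row $i$ in a fixed row $u$ of each of the $t$ blocks $(i,1),\dots,(i,t)$, taking $k$ entries from each block.
\begin{itemize}
\item In the first coordinate, the entries $k^2a_{ij}$ contribute $k\cdot k^2\sum_j a_{ij}=k^3\delta_1$, evaluated in $Z_{k^2m_1}$ via the well-defined multiplication. The entries of $M$ contribute $t\cdot k(k^2-1)/2=n(k^2-1)/2$.
\item In the second coordinate the row sum is $k\sum_j b_{ij}=k\delta_2$.
\end{itemize}
Columns are handled symmetrically. The main diagonal of $B$ passes only through the diagonal blocks $(i,i)$ and, inside each, along the main diagonal of $M$. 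The antidiagonal of $B$ passes through the blocks $(i,t+1-i)$ along the antidiagonal of $M$. The same computation then applies, using that $A$ is magic on its diagonals and $M$ is magic on its diagonals.

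So every line of $B$ sums to $\mu=\bigl(k^3\delta_1+n(k^2-1)/2,\;k\delta_2\bigr)$, as claimed. The degenerate case $t=1$ (that is, $m_1=m_2=1$) is covered by the same argument.
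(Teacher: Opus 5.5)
Your proof is correct and complete. The bijection $(a,b,c)\mapsto(k^2a+c,b)$ is valid, $a\mapsto k^2a$ is a well-defined homomorphism $Z_{m_1}\to Z_{k^2m_1}$, and every line of $B$ (rows, columns, and both diagonals, which pass through the diagonal and antidiagonal blocks) collects $k$ copies of $k^2$ times a line of $A$ plus $t$ copies of a line of the shifted classical square. This gives exactly $\bigl(k^3\delta_1+n(k^2-1)/2,\,k\delta_2\bigr)$.

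This paper does not prove the lemma; it only quotes it from \cite{CicFro}, so there is no proof here to compare against. Your block inflation of $A$ by a shifted classical magic square of side $k$ accounts for each term of the stated constant and stands as a self-contained proof.
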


Observe that by Lemma~\ref{lem:SWL} we immediately obtain the following.
\begin{lemma}\label{lem:SWL2}
	Let $\Gamma$ be an Abelian group of order $n^2$. 
	Let  $\Gamma\cong \Gamma_0\oplus H$ for some groups $\Gamma_0$ with $|\Gamma_0|=m^2$, $m>1$ and $H$ with $|H|=k^2$. If there exists a   $\ams{\Gamma_0}(m)$ and {an $H$-Kotzig array of size $m\times k^2$}, then there exists a  $\ams{\Gamma}(n)$.
\end{lemma}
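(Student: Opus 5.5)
The statement to prove is Lemma~\ref{lem:SWL2}. It follows from Lemma~\ref{lem:SWL} once we observe that, for a zero-sum input square, the magic sum produced by that lemma is zero.

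\emph{Setting up.} Suppose we are given a $\ams{\Gamma_0}(m)$ and an $H$-Kotzig array of size $m\times k^2$. A $\ams{\Gamma_0}(m)$ is by definition a $\ms{\Gamma_0}(m)$ whose magic sum is $\delta=0\in\Gamma_0$. Hence all hypotheses of Lemma~\ref{lem:SWL} are satisfied: $\Gamma\cong\Gamma_0\oplus H$ with $|\Gamma_0|=m^2$, $m>1$, $|H|=k^2$, a $\Gamma_0$-magic square of side $m$ with magic sum $\delta$, and an $H$-Kotzig array of the required size.

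\emph{Applying Lemma~\ref{lem:SWL}.} The lemma yields a $\ms{\Gamma}(n)$ with magic sum $(k\delta,0)\in\Gamma_0\oplus H$. With $\delta=0$ we get $k\delta=0$ in $\Gamma_0$, so the magic sum is $(0,0)$, the identity of $\Gamma$. This uses the fact that the Kotzig array's column sums do not enter the magic sum formula; the formula stated in Lemma~\ref{lem:SWL} already has second coordinate $0$. By definition, a $\Gamma$-magic square with magic sum $0$ is a $\ams{\Gamma}(n)$.

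\emph{Possible gap.} The only care needed is that $\Gamma$ is identified with $\Gamma_0\oplus H$ through a fixed isomorphism. This isomorphism sends $(0,0)$ to $0$, and it preserves bijectivity of the entries and all line sums, so the square transfers to $\Gamma$ itself.
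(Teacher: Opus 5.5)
Your proposal is correct and matches the paper's argument: the paper also obtains this lemma immediately from Lemma~\ref{lem:SWL} by taking $\delta=0$, so the resulting magic sum $(k\delta,0)$ is $(0,0)$. Your remark about the Kotzig array's column sums is harmless in any case, since the paper notes that any Kotzig array can be normalized to have zero column sums.
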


\section{Constructions}\label{constructions}

When needed, we will denote the row, column, main diagonal, and backward main diagonal sums by $\rho_i,\sigma_j,\tau$, and $\kappa$, respectively, possibly with superscripts when we need to distinguish between different squares.

\subsection{Construction for $n$ odd}\label{sec: n odd}

We start with  a useful lemma.

\begin{lemma}\label{primep}
	Let  $\Gamma$ be an Abelian group of order $n^2$ where $n$ is an odd integer. If $\Gamma\cong Z_{n^2}$ or $\Gamma\cong Z_n\oplus Z_n$, then there exists a $\ams{\Gamma}(n)$.
\end{lemma}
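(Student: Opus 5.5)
We build the square explicitly from two orthogonal Latin-square coordinates, as in the classical de la Loubère/Euler construction for odd side. Index rows and columns by $i,j\in\{0,1,\dots,n-1\}$, viewed as elements of $Z_n$. The choice of the two coordinate functions $a(i,j)=i+j$ and $b(i,j)=i+2j$ (computed in $Z_n$) is the key. Since $n$ is odd, $2$ is invertible modulo $n$, so the map $(i,j)\mapsto(a,b)$ is a bijection $Z_n\times Z_n\to Z_n\times Z_n$. Each of $a$ and $b$, restricted to any row or any column, is a permutation of $Z_n$. On the main diagonal $i=j$ we have $a=2i$ and $b=3i$; on the antidiagonal $j=-1-i$ we have $a=-1$ (constant) and $b=-i-2$.

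\emph{Case $\Gamma\cong Z_n\oplus Z_n$.} Place $(a(i,j),b(i,j))$ in cell $(i,j)$. Every element appears exactly once. In each row and each column, both coordinates run over all of $Z_n$, whose sum is $n(n-1)/2\equiv 0 \pmod n$ because $n$ is odd. On the main diagonal the first coordinate $2i$ runs over $Z_n$, so its sum is $0$. The second coordinate $3i$ sums to $3\cdot n(n-1)/2\equiv 0$ whether or not $3\mid n$. On the antidiagonal the first coordinate sums to $n\cdot(-1)=0$ in $Z_n$, and the second coordinate is a permutation of $Z_n$, so its sum is $0$. Hence $\mu=0$. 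It is worth using a representation that keeps all sums as multiples of $\sum_{x\in Z_n}x$ or of $n$, which makes every line vanish.

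\emph{Case $\Gamma\cong Z_{n^2}$.} Put $x(i,j)=n\,a(i,j)+b(i,j)$, interpreting $a,b\in\{0,\dots,n-1\}$ as integers and reducing modulo $n^2$. By the bijection this is a permutation of $Z_{n^2}$. The main obstacle is that sums are now taken modulo $n^2$, so the carry from reducing $a,b$ modulo $n$ matters. To handle it, I first compute integer sums of the representatives. On any row, column, or the main diagonal, both $a$ and $b$ take each value in $\{0,\dots,n-1\}$ exactly once. That holds for the main diagonal because $2$ and $3$ act as permutations, except when $3\mid n$, which needs separate care. In those lines the sum is $n\cdot n(n-1)/2+n(n-1)/2=(n^2+1)\,n(n-1)/2\equiv n(n-1)/2\pmod{n^2}$, which is $0 \pmod{n^2}$ only after a correction.

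I would therefore shift: replace $x$ by $x-c$ with $c=(n^2-1)/2$, that is, use symmetric representatives $a,b\in\{-(n-1)/2,\dots,(n-1)/2\}$. Then any line on which $a$ and $b$ each take every symmetric residue exactly once has integer sum $0$. On the antidiagonal, $a$ is the constant $-1$ and $b$ is a permutation, giving sum $-n\not\equiv 0\pmod{n^2}$ in general. So I would choose the linear forms so that on both diagonals each coordinate is either a permutation or constant $0$. For example, take $a=i+j-(n-1)$ shifted so that on the antidiagonal $a\equiv 0$, i.e.\ $a=i+j+1$, with the symmetric representative $0$. On the main diagonal $a=2i+1$ is still a permutation.

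When $3\mid n$, $b=i+2j$ fails on the main diagonal, so I would instead use $b=i-j$ plus a constant. Then $b$ is $0$ (constant) on the main diagonal and $2i+c'$ (a permutation) on the antidiagonal, and $a$, $b$ remain orthogonal since $\det\begin{pmatrix}1&1\\1&-1\end{pmatrix}=-2$ is invertible. Verifying that each line sum of symmetric representatives is exactly $0$ then finishes both cases, and the same symmetric-representative square also serves the case $Z_n\oplus Z_n$.
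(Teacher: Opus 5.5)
Your final construction is correct, but it takes a different route from the paper. For $Z_n\oplus Z_n$ the paper uses an even simpler square, with entry $(i-1,j-1)$ in cell $(i,j)$:
\begin{itemize}
\item on a row the first coordinate is constant, so it sums to $n(i-1)=0$, and the second coordinate runs through $Z_n$;
\item columns behave symmetrically;
\item on both diagonals both coordinates run through $Z_n$.
\end{itemize}
So no pair of orthogonal Latin squares is needed there.

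For $Z_{n^2}$ the paper constructs nothing. It takes a classical integer magic square from Theorem~\ref{thm:msquare} and translates every entry by a constant. Your square instead has entry $n\,s(a)+s(b)$ at $(i,j)$, where $s$ is the balanced representative in $\{-(n-1)/2,\dots,(n-1)/2\}$, $a=i+j+1$ and $b=i-j$. Its entries are $\{-(n^2-1)/2,\dots,(n^2-1)/2\}$, and its line sums are $0$ already in $\mathbb{Z}$, because on every line each coordinate is either a permutation of $Z_n$ or identically $0$.

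Your approach is self-contained, handles both groups with one square, and even reproves the odd case of Theorem~\ref{thm:msquare} (add $(n^2+1)/2$ to every entry). The paper's route is shorter but relies on the cited theorem and on choosing the shift correctly. As written there, merely replacing $n^2$ by $0$ leaves the constant $n(n^2+1)/2$, so the shift by $(n^2-1)/2$ leaves constant $n$. One must first subtract $1$ from every entry, as in the paper's $Z_9$ example, or shift by $(n^2+1)/2$ instead.

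Several intermediate claims in your write-up are wrong, although your final choice survives them.
\begin{itemize}
\item $n\cdot n(n-1)/2+n(n-1)/2=(n+1)n(n-1)/2$, not $(n^2+1)n(n-1)/2$. Your residue $n(n-1)/2$ is nonetheless right.
\item Your objection to the antidiagonal is mistaken. The first coordinate carries the factor $n$, so a constant first coordinate contributes $n\cdot n\cdot s(-1)=-n^2\equiv 0\pmod{n^2}$. Only the second coordinate has to sum to $0$.
\item With genuine balanced representatives, $\sum_i s(3i)=0$ for every odd $n$, since $s(-x)=-s(x)$ and $i\mapsto -i$ permutes $Z_n$. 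So your original forms $a=i+j$, $b=i+2j$ already work for all odd $n$. Independently, $b=i-j$ also works for all odd $n$. Either way, the case split on $3\mid n$ is unnecessary.
\item ``Replace $x$ by $x-c$'' is not the same as ``use the balanced representatives of $a$ and $b$''. With $a_{\mathrm{rep}},b_{\mathrm{rep}}\in\{0,\dots,n-1\}$ and $h=(n-1)/2$ one has $n a_{\mathrm{rep}}+b_{\mathrm{rep}}-c=n\,s(a-h)+s(b-h)$. Under that literal reading, $b=i-j$ would contribute $-nh\not\equiv 0\pmod{n^2}$ on the main diagonal. Define the entry directly as $n\,s(a)+s(b)$, which is what your last paragraph actually uses.
\end{itemize}
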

\begin{proof} 
	Assume first that $\Gamma\cong Z_{n^2}$. It follows from Theorem~\ref{thm:msquare} that there exists a 
	magic square $A'=\{a_{i,j}'\}_{i,j=1}^{n}=\ms{}(n)$ (in integers). Replacing only $n^2$ by 0 and performing addition modulo $n^2$ gives us an $A=\gms(n)=\{a_{i,j}\}_{i,j=1}^{n}$ with the   magic constant $\delta=n(n^2-1)/2$. Observe that $(n^2-1)/2\in Z_{n^2}$ since $n$ is odd. Let now  $B=\{b_{i,j}\}_{i,j=1}^{n}$ be defined as $b_{i,j}=a_{i,j}-(n^2-1)/2$ for $i,j\in\{1,2,\ldots,n\}$. Obviously $B$ is a $\Gamma$-magic square with the magic constant $\mu=\delta-n(n^2-1)/2=0$.
	
	Let now $\Gamma\cong Z_n\oplus Z_n$, then we will use the construction presented in~\cite{Sun-Yihui}. Let $B=\{b_{i,j}\}_{i,j=1}^{n}$ be defined as $b_{i,j}=(i-1,j-1)$ for $i,j\in\{1,2,\ldots,n\}$. Observe that the row sums are $\sum_{j=1}^{n}(i-1,j-1)=(n(i-1),n(n-1)/2)=(0,0)$, the column sum is $\sum_{i=1}^{n}(i-1,j-1)=(n(n-1)/2,n(j-1))=(0,0)$, the main diagonal sum is $\sum_{i=1}^{n}(i-1,i-1)=(n(n-1)/2,n(n-1)/2)=(0,0)$ and the main antidiagonal sum is $\sum_{i=1}^{n}(i-1,n-i)=(n(n-1)/2,n-n(n-1)/2)=(0,0)$.
\end{proof}

 We will state now our main result in this section.

\begin{theorem}\label{thm:main-odd}
	Let $\Gamma$ be an Abelian group of order $n^2$ where $n\geq3$ is odd. Then there exists a  $\agms(n)$ of side $n$.
\end{theorem}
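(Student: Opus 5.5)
We argue by induction on $n$, using the structure theorem to peel off a direct summand that is already covered by Lemma~\ref{primep}, and then gluing with Lemma~\ref{lem:SWL2}. Since $|\Gamma|=n^2$ is odd, $\Gamma$ has no involution, so $\Gamma\in\gr$. By Theorem~\ref{thm:Kotzig}, for every group $H$ of odd order $k^2$ and every $j>1$ there is an $H$-Kotzig array of size $j\times k^2$. Thus the Kotzig-array hypothesis of Lemma~\ref{lem:SWL2} is automatic in the odd case. The whole proof reduces to finding a decomposition $\Gamma\cong\Gamma_0\oplus H$ with $|\Gamma_0|=m^2$, $m>1$, $|H|=k^2$, where a $\ams{\Gamma_0}(m)$ is known to exist.

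Write $n=p^{a}n'$ with $p$ an odd prime dividing $n$ and $\gcd(p,n')=1$. Then $\Gamma\cong P\oplus Q$, where $P$ is the Sylow $p$-subgroup of order $p^{2a}$ and $|Q|=n'^2$. If $n'>1$, then $|P|$ and $|Q|$ are squares of odd integers greater than $1$, so induction (or, more simply, this step for each Sylow subgroup) reduces everything to $p$-groups. So let $\Gamma$ be a $p$-group of order $p^{2a}$, written $\Gamma\cong Z_{p^{e_1}}\oplus\cdots\oplus Z_{p^{e_r}}$ with $\sum e_i=2a$. We want a subgroup summand $\Gamma_0$ of square order with $\Gamma_0\cong Z_{m^2}$ or $Z_m\oplus Z_m$. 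If some $e_i\ge2$, take $\Gamma_0=Z_{p^2}$ if $e_i=2$. If some $e_i>2$, the cyclic factor $Z_{p^{e_i}}$ does not split off $Z_{p^2}$, so this choice fails in general. The alternatives are two factors $Z_p$, giving $\Gamma_0=Z_p\oplus Z_p$, or two cyclic factors of equal exponent $Z_{p^e}\oplus Z_{p^e}$. The leftover $H$ must also have square order, which holds because $2a-(\text{exponent used})$ is even in each case.

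The main obstacle is the remaining configurations, where no summand of the form $Z_{m^2}$ or $Z_m\oplus Z_m$ exists with complementary square order. A typical example is $Z_{p^3}\oplus Z_{p}$, or more generally all exponents distinct with a mix of parities. Here I would switch from Lemma~\ref{lem:SWL2} to Lemma~\ref{lem:DWL}, which handles $\Gamma\cong Z_{k^2m_1}\oplus Z_{m_2}$ by blowing up a square over $Z_{m_1}\oplus Z_{m_2}$. For $Z_{p^3}\oplus Z_p$ take $k=p$, $m_1=m_2=p$. Lemma~\ref{primep} supplies a zero-sum $\ms{Z_p\oplus Z_p}(p)$, so $\delta=(0,0)$. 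Lemma~\ref{lem:DWL} then yields magic sum $\mu=(n(k^2-1)/2,0)$, which is generally nonzero. Since $n$ is odd, $(k^2-1)/2$ is an integer, so I would subtract a constant $c$ from every entry, as in the proof of Lemma~\ref{primep}. This changes the magic sum by $nc$, and I need to solve $nc=\mu$ in $\Gamma$. That is possible because $\mu=n\cdot((k^2-1)/2,0)$ is an $n$-multiple. The same translation trick would in fact let one avoid the case split for Lemma~\ref{lem:DWL} entirely: whenever some $\Gamma$-magic square has magic sum in $n\Gamma$, a translate is zero-sum. So an alternative plan is to revisit the constructions behind Theorem~\ref{thm:main} and check that the magic sum is always $n$ times something. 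I would first try to cover all cases by iterating Lemma~\ref{lem:DWL} down to $Z_{m_1}\oplus Z_{m_2}$ with $m_1m_2$ a square handled by Lemma~\ref{primep} or by induction, translating at the end.
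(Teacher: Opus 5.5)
Your toolkit is the paper's: Lemma~\ref{primep} for the base squares, Lemma~\ref{lem:DWL} followed by a translation, and Lemma~\ref{lem:SWL2} with a Kotzig array from Theorem~\ref{thm:Kotzig} (always available here, since every group of odd order lies in $\gr$). The gap is that your case analysis on the $p$-part is never closed. You carry out the Lemma~\ref{lem:DWL} step only for $Z_{p^3}\oplus Z_p$, and you leave the general configuration to a tentative plan: either iterating Lemma~\ref{lem:DWL}, or checking that the constructions behind Theorem~\ref{thm:main} always have magic sum in $n\Gamma$, which is unproven.

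Part of the trouble is that you under-use Lemma~\ref{primep}. It gives a zero-sum square for $Z_{m^2}$ with \emph{any} odd $m$. So every cyclic factor $Z_{p^{e}}$ with $e$ even (not just $e=2$) is already an admissible $\Gamma_0$, and its complement has square order $n^2/p^{e}$; in particular the cyclic case $\Gamma\cong Z_{n^2}$ is immediate. Hence your description of the leftover cases as ``all exponents distinct with a mix of parities'' is wrong: a mix of parities contains an even exponent, which is the easy case.

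What is missing is a parity count. In $Z_{p^{e_1}}\oplus\cdots\oplus Z_{p^{e_r}}$ with $\sum e_i=2a$, either some $e_i$ is even, or all $e_i$ are odd. In the latter case $r$ is even, so there are two factors $Z_{p^{e}}\oplus Z_{p^{f}}$ with $e\ge f$ both odd.
\begin{itemize}
\item If $e=f$, Lemma~\ref{primep} applies directly.
\item If $e>f$, apply Lemma~\ref{lem:DWL} once, with $k=p^{(e-f)/2}\ge 3$ and $m_1=m_2=p^{f}$, to the zero-sum square of Lemma~\ref{primep}. This gives a square of side $p^{(e+f)/2}$ with magic sum $(p^{(e+f)/2}(k^2-1)/2,0)$. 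Subtracting $((k^2-1)/2,0)$ from every entry makes it zero-sum.
\end{itemize}
The complement of this one- or two-factor summand has square order, so Lemma~\ref{lem:SWL2} finishes at once. No iteration of Lemma~\ref{lem:DWL}, no induction on $n$, and no preliminary Sylow reduction is needed. With this step supplied, your argument becomes the paper's proof. The paper's choice of $H\cong Z_{p^{2\lambda}}$ or $H\cong Z_{p^\alpha}\oplus Z_{p^\beta}$ with $\alpha+\beta$ even tacitly relies on the same parity count.
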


\begin{proof}	
	By Fundamental Theorem of Finite Abelian Groups the group $\Gamma$ is either  cyclic and the result follows directly from Lemma~\ref{primep}, or $\Gamma\cong H\oplus K$, where $|H|=p^{2\lambda}$ for some prime number $p>2$ and integer $\lambda\ge 1$. In the latter case, we either have $H\cong Z_{p^{2\lambda}}$ and $\ams{H}(p^{\lambda})$ exists by Lemma~\ref{primep}, or $H\cong Z_{p^{\alpha}}\oplus Z_{p^{\beta}}$  for $\alpha+\beta=2\lambda$. In this case without loss of generality we can assume that $\beta\leq \alpha$. By Lemma~\ref{primep} there exists a $\ams{Z_{p^{\beta}}\oplus Z_{p^{\beta}}}(p^{\beta})$. Applying now Lemma~\ref{lem:DWL} there exists an $\ms{Z_{p^{\alpha}}\oplus Z_{p^{\beta}}}(p^{\lambda})=A=\{a_{i,j}\}_{i,j=1}^{p^{\lambda}}$ with the magic constant 
	$\delta=(p^{\lambda}(p^{\alpha-\beta}-1)/2,0)$. Note that $(p^{\alpha-\beta}-1)/2 \in Z_{p^{\alpha}}$. Let now  $B=\{b_{i,j}\}_{i,j=1}^{p^{\lambda}}$ be defined as $b_{i,j}=a_{i,j}-((p^{\alpha-\beta}-1)/2,0)$ for $i,j\in\{1,2,\ldots,p^{\lambda}\}$. Obviously $H$ is a $\Gamma$-magic square with the magic constant $\mu=\delta-(p^{\lambda}(p^{\alpha-\beta}-1)/2,0)=(0,0)$.

	 If 
{$\Gamma=H$} 
 	we are done, otherwise 
{$|K|>1$ is odd} and
	$K \in \mathcal{G}$. Then by Theorem~\ref{thm:Kotzig} there exists a {$K$-Kotzig array}  of size $p^{\lambda}\times |K|$ and  we apply Lemma~\ref{lem:SWL2}.

\end{proof}


\subsection{Construction for $n=2^s$}\label{sec:n=2tos}
The main idea of the proofs in this section  is a construction based on block decomposition. Roughly speaking, for $\beta > \alpha$, we construct a zero‑sum magic square of side $2^{\beta}$ from a zero‑sum magic square of side $2^{\alpha}$ by partitioning the larger square into suitably chosen subsquares whose row, column, and diagonal sums cancel out. We begin with the following lemma, which plays a crucial role in our constructions.

\begin{lemma}\label{lem: Z_2^c + Z_2^c+d}
Let $\Gamma=Z_{2^{\alpha}}\oplus H$ be an Abelian group of order $2^{2\gamma}$ for $\gamma\geq 3$ and $2\gamma=\alpha+\beta$.  Let $\Gamma_0=Z_{2^{\alpha-2}}\oplus H$.   If $\alpha\leq \beta$ and there exists a  $\ams{\Gamma_0}(2^{\gamma-1})$, then there exists a $\agms(2^{\gamma})$.
\end{lemma}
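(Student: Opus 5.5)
We plan a block (doubling) construction. Write $n=2^{\gamma}$ and $m=2^{\gamma-1}$, and let $A=\{a_{i,j}\}_{i,j=1}^{m}$ be the given $\ams{\Gamma_0}(m)$ with entries in $\Gamma_0=Z_{2^{\alpha-2}}\oplus H$. Embed $\Gamma_0$ into $\Gamma=Z_{2^{\alpha}}\oplus H$ by $\varphi(x,h)=(4x,h)$. This is a group homomorphism, so $\varphi(A)$ is still zero-sum in every line. Since $\varphi$ is injective with image of index $4$, the four cosets are $\varphi(\Gamma_0)+(t,0)$ for $t\in\{0,1,2,3\}\subseteq Z_{2^{\alpha}}$, and they partition $\Gamma$. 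The target square $B$ of side $n=2m$ will be cut into four $m\times m$ quadrants, or more flexibly into $2\times 2$ blocks indexed by the cells of an $m\times m$ array. The idea is that each block, or each quadrant, uses a copy $\varphi(A)+(t,0)$ for a different $t$, possibly after permuting rows and columns of $A$ inside the quadrant.

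Our first choice is the $2\times 2$ block form. Replace the entry $a_{i,j}$ by the $2\times2$ block
\[
\begin{pmatrix} \varphi(a_{i,j})+(c_1,0) & \varphi(a_{i,j})+(c_2,0)\\ \varphi(a_{i,j})+(c_3,0) & \varphi(a_{i,j})+(c_4,0)\end{pmatrix},
\]
where $\{c_1,c_2,c_3,c_4\}=\{0,1,2,3\}$ modulo the image of $\varphi$. In practice we take the $c_k$ equal to $t$ or $t+$(something in $4Z_{2^\alpha}$), with signs allowed to depend on $(i,j)$, so that every element of $\Gamma$ appears exactly once. Each row of $B$ passes through $m$ blocks. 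Its sum is $\sum_j\varphi(a_{i,j})=0$ plus $m$ copies of a constant offset, for example $m(c_1+c_2)$ for an odd row. So it suffices to choose the offsets so that these constant contributions vanish in $Z_{2^\alpha}$. The constraint $\alpha\le\beta$ gives $\alpha\le\gamma$, so $2^{\alpha}$ divides $2^{\gamma}=2m$. Hence $2m\cdot x=0$ for any $x$, and $m\cdot x=0$ whenever $x$ is even. We therefore aim for offset patterns in which each row pair, each column pair and each diagonal accumulates an even multiple of $m$ or an even total offset. One option is to alternate the patterns $(0,3;1,2)$-type and $(3,0;2,1)$-type, with suitable negatives like $-1\equiv 2^\alpha-1$, across the blocks.

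The diagonals are the main obstacle. The main diagonal of $B$ passes only through the diagonal blocks $(i,i)$, and there it uses their $(1,1)$ and $(2,2)$ entries. It therefore sums to $\sum_i\varphi(a_{i,i})$ plus $\sum_i (c_1+c_4)$, which is $0+m(c_1+c_4)$. The antidiagonal similarly picks up $m(c_2+c_3)$ from the blocks $(i,m+1-i)$. We will choose $c_1+c_4\equiv c_2+c_3\equiv 3+ 0$ or $1+2$ adjusted so that each is even, e.g.\ $c$-values $\{0,1,-1,2\}$ placed so that $c_1+c_4=0+2$ and $c_2+c_3=1+(-1)$. The set $\{0,1,-1,2\}$ is still a transversal of the cosets of $4Z_{2^\alpha}$. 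Row sums then involve $m(c_1+c_2)$ and $m(c_3+c_4)$, and column sums involve $m(c_1+c_3)$ and $m(c_2+c_4)$. With a single fixed pattern these are not all even, so we alternate two patterns (swapping columns of the block) between consecutive $j$ or $i$. Each line then collects $\tfrac m2$ copies of each variant, with total $\tfrac m2(\text{sum of both})$, and that total is even times $\tfrac m2\cdot 2$. Because $\gamma\ge3$ we have $m\ge4$, so $m/2$ is even, and this gives the needed parity slack. Finally we check that the alternation does not disturb the diagonal blocks, or that it preserves $c_1+c_4$ and $c_2+c_3$ there. With this, all lines of $B$ sum to $0$ and $B$ is a $\agms(2^\gamma)$.
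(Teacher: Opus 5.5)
Your block-inflation framework (each $a_{i,j}$ becomes a $2\times2$ block of $\varphi(a_{i,j})$ plus a fixed transversal of $Z_{2^\alpha}/4Z_{2^\alpha}$) is a legitimate alternative to the paper's quadrant construction. However, the offset bookkeeping, which is the whole content of the lemma, fails as you set it up.

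The target is not an \emph{even} offset total per line. Since $\alpha=\gamma$ is allowed (e.g.\ $\Gamma=Z_8\oplus Z_8$, the paper's own example), every line of $B$ must collect an offset total divisible by $2^{\gamma}=2m$. With $\{c_1,c_4\}=\{0,2\}$ and $\{c_2,c_3\}=\{1,-1\}$, all four block line sums $c_1+c_2$, $c_3+c_4$, $c_1+c_3$, $c_2+c_4$ are odd.

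Swapping the two columns of a block does not change its row sums. So whichever direction you alternate in, each row of $B$ still collects $m$ copies of one odd number, namely $2^{\gamma-1}\cdot(\text{odd})\neq 0$ in $Z_{2^\gamma}$. For $Z_8\oplus Z_8$, every row of your $B$ has first coordinate $4$. Columns fare no better: alternating a pattern with its column swap along $i$ gives $\frac m2\bigl((c_1+c_3)+(c_2+c_4)\bigr)=\frac m2\cdot 2=m\neq0$. The swap also exchanges the values $2$ and $0$ of $c_1+c_4$ and $c_2+c_3$, so it does not preserve your diagonal data either. Finally, the remark that $m/2$ is even gives no slack: $\frac m2(x+y)=2^{\gamma-2}(x+y)$ vanishes in $Z_{2^\gamma}$ only if $4\mid x+y$.

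What is missing is an alternation that changes the \emph{row} sums so that they cancel modulo $2m$, while keeping $c_1+c_4=2$ and $c_2+c_3=0$. Transposition does this. Put $P=\begin{pmatrix}0&1\\-1&2\end{pmatrix}$ on blocks with $i+j$ even and $P^{T}=\begin{pmatrix}0&-1\\1&2\end{pmatrix}$ on blocks with $i+j$ odd. Then:
odd rows of $B$ collect $\frac m2(1-1)=0$ and even rows collect $\frac m2(1+3)=2m$;
odd columns collect $\frac m2(-1+1)=0$ and even columns collect $\frac m2(3+1)=2m$;
the main diagonal collects $2m$ and the antidiagonal $0$.
All of these vanish because $\alpha\le\gamma$, and only $m$ even is needed.

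Also keep the representatives fixed across blocks. Letting a coset's representative vary with $(i,j)$, as your ``signs allowed to depend on $(i,j)$'' suggests, can break bijectivity, since $\varphi(a)+(-1,0)=\varphi(a')+(3,0)$ whenever $a=a'+(1,0)$.

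With this repair your route is a valid variant of the paper's. The paper instead adds residual squares to four quadrant copies of $\varphi(A)$: $R^{1,1}\equiv0$, $R^{2,2}\equiv2$, and complementary $1/3$ checkerboards in $R^{1,2}$ and $R^{2,1}$. Every quadrant row and column then collects a multiple of $2^\gamma$, and the two antidiagonal halves $2^{\gamma-1}\cdot3$ and $2^{\gamma-1}\cdot1$ cancel jointly.
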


\begin{proof} 
	Note that $|H|=2^{\beta}$. We will build an $M_{\gamma}=\ams{\Gamma}(2^{\gamma})$, out of four subsquares $M^{s,t}\{(a,b)_{i,j}^{s,t}\}_{i,j=1}^{2^{\gamma-1}}, s,t\in[1,2]$. 
	
By the assumption of the theorem a $\ams{\Gamma_0}(2^{\gamma-1})=M_{\gamma-1}=\{(a,b)_{i,j}\}_{i,j=1}^{2^{\gamma-1}}$ exists.
	First we construct a $2^{\gamma-1}\times2^{\gamma-1}$ square $M'_{\gamma-1}$   from $M_{\gamma-1}$ by replacing each
	$m_{i,j}=(a,b)_{i,j}\in M_{\gamma-1}$ with
	\begin{equation*}
		m'_{i,j} = ({4}a,b)_{i,j}.
	\end{equation*}
	Because $M_{\gamma-1}$ is a zero-sum square, it should be obvious that all row, column, and relevant diagonal sums in $M'_{\gamma-1}$ are also zero.

	Then we construct the subsquares $M^{s,t}$ by combining $M'_{\gamma-1}$ with four {residual squares} $R^{s,t}=\{(c,d)_{i,j}^{s,t}\}_{i,j=1}^{2^{\gamma-1}}, s,t\in[1,2]$. For every $i,j\in[1,2^{\gamma-1}]$ we define
	\begin{align*}
		r^{1,1}_{i,j}	&= (0,0),	
		&r^{1,2}_{i,j}	&=
		\begin{cases*}
			(1,0) \text{\ for\ } i+j \text{\ even}	\\
			(3,0) \text{\ for\ } i+j \text{\ odd}	
		\end{cases*},\\ 
		r^{2,1}_{i,j} 	&=
		\begin{cases*}
			(3,0) \text{\ for\ }  i+j \text{\ even}		\\
			(1,0) \text{\ for\ }  i+j \text{\ odd}		
		\end{cases*},
		&r^{2,2}_{i,j} 	&= (2,0)
	\end{align*}

	First we recall that $\alpha\leq \beta$ and therefore
	$
		\gamma = (\alpha + \beta)/2  \geq \alpha
	$
	and observe that all relevant sums (except for back diagonals in $R^{1,2}$ and $R^{2,1}$) are also zero. For instance, 
	we have
	\begin{equation*}\label{eq:residual sum}
		\rho^{1,2}_1 
			=(1,0)+(3,0)+\dots+(1,0)+(3,0)
			= 2^{\gamma-2}(1+3,0) 
			= (2^{\gamma},0) = (0,0)
	\end{equation*}
	because $2^{\gamma}\in Z_{2^{\alpha}}$ and $\gamma\geq \alpha$. Similar calculations give all row and column sums as well as the main diagonals in $R^{1,1}$ and $R^{2,2}$.
	For the back diagonals in $R^{1,2}$ and $R^{2,1}$ we have
	$$
		\kappa^{1,2}=2^{\gamma-1}(3,0) \text{\ and\ } \kappa^{2,1}=2^{\gamma-1}(1,0)
	$$
	and their sum across the backward diagonal in the complete 
	$2^{\gamma}\times2^{\gamma}$ rectangle is  $2^{\gamma-1}(3+1,0) =(2^{\gamma+1},0)=(0,0)$.
	Now we just set for every subsquare $M^{s,t}$
	$$
		m^{s,t}_{i,j}=m'_{i,j} + r^{s,t}_{i,j}.
	$$
	The entries in $M_{\gamma}$ are exactly all elements of 
	$Z_{{2^{\alpha}}}\oplus Z_{2^{\beta}}$
	and because all relevant sums in $M'_{\gamma-1}$ and $R^{s,t}$ are equal to $(0,0)$, the proof is complete.
\end{proof}

\begin{figure}[H]
$$
\begin{array}{||c|c|c|c||c|c|c|c||}
\hline
\hline
	 (0,2)  & (0,0) & (4,2) & (4,4)	&(1,2)  & (3,0) & (5,2) & (7,4)\\ \hline
	 (4,6)  & (4,0) & (0,6) & (0,4)	&(7,6)  & (5,0) & (3,6) & (1,4)\\ \hline
	 (4,1)  & (4,7) & (4,3) & (4,5)	&(5,1)  & (7,7) & (5,3) & (7,5)\\ \hline
	 (0,7)  & (0,1) & (0,5) & (0,3)	&(3,7)  & (1,1) & (3,5) & (1,3)\\ \hline
\hline
	 (3,2)  & (1,0) & (7,2) & (5,4)	&(2,2)  & (2,0) & (6,2) & (6,4)\\ \hline
	 (5,6)  & (7,0) & (1,6) & (3,4)	&(6,6)  & (6,0) & (2,6) & (2,4)\\ \hline
	 (7,1)  & (5,7) & (7,3) & (5,5)	&(6,1)  & (6,7) & (6,3) & (6,5)\\ \hline
	 (1,7)  & (3,1) & (1,5) & (3,3)	&(2,7)  & (2,1) & (2,5) & (2,3)\\ \hline
\hline	
\end{array}
$$
\caption{$\ams{Z_{8}\oplus Z_{8}}(8)$}	
 	\label{fig:example Z_8 x Z_8}
\end{figure}

\begin{exm}\label{exm: Z_8 x Z_8}
	In Figure~\ref{fig:example Z_8 x Z_8} we show the construction of $\ams{Z_{8}\oplus Z_{8}}(8)$ from $\ams{Z_{2}\oplus Z_{8}}(4)$, which is given in Figure~\ref{fig: Z_2 x Z_8}.
\end{exm}

	\begin{figure}[H]
	\begin{center}
	\begin{tabular}{|c|c|c|c|}
	\hline
	    (0,2)  & (0,0) & (1,2) & (1,4)\\ \hline
	    (1,6)  & (1,0) & (0,6) & (0,4)\\ \hline
	    (1,1)  & (1,7) & (1,3) & (1,5)\\ \hline
	    (0,7)  & (0,1) & (0,5) & (0,3)\\ \hline
	  \end{tabular}
	  \caption{$\ams{Z_{2}\oplus Z_8}(4)$}
	  \label{fig: Z_2 x Z_8}
	\end{center}
	\end{figure}

We now treat two special cases, in which the group $\Gamma$ contains a small component. 

\begin{lemma}\label{obs: Z_2 + Z_2^c}
 	Let $\Gamma= Z_{2}\oplus Z_{2^{2\alpha-1}}$ and $\alpha\geq2$. Then there exists a $\agms(2^\alpha)$.
\end{lemma}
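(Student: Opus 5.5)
I would argue by induction on $\alpha$, following the block decomposition of Lemma~\ref{lem: Z_2^c + Z_2^c+d}. For the base case $\alpha=2$, the square in Figure~\ref{fig: Z_2 x Z_8} is a $\ams{Z_2\oplus Z_8}(4)$. I would confirm this by direct inspection: there are four row sums, four column sums and two diagonal sums to check, all in $Z_2\oplus Z_8$.

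For the inductive step I assume a $\ams{\Gamma_0}(2^{\alpha})$ exists with $\Gamma_0=Z_2\oplus Z_{2^{2\alpha-1}}$, and aim to build one for $\Gamma=Z_2\oplus Z_{2^{2\alpha+1}}$ of side $2^{\alpha+1}$. Lemma~\ref{lem: Z_2^c + Z_2^c+d} cannot be applied directly. The component that must be enlarged is the large cyclic one, so the hypothesis $\alpha\le\beta$ fails. Concretely, a constant residual $(0,2)$ in the block $M^{2,2}$ contributes a row sum of $(0,2^{\alpha})\neq 0$ in $Z_{2^{2\alpha+1}}$. So I would redo the construction by hand.

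\begin{enumerate}
\item Form $M'$ from the given square by $m'_{i,j}=(a,4b)_{i,j}$. Multiplication by $4$ maps $Z_{2^{2\alpha-1}}$ injectively onto $4Z_{2^{2\alpha+1}}$, so $M'$ uses each element of $Z_2\oplus 4Z_{2^{2\alpha+1}}$ exactly once and still has all relevant sums zero.
\item Build the four $2^{\alpha}\times 2^{\alpha}$ blocks $M^{s,t}=M'+R^{s,t}$ with sign-balanced residuals rather than constant ones:
\begin{itemize}
\item $R^{1,1}=0$;
\item $R^{1,2}$ equal to $(0,1)$ when $i+j$ is even and $(0,-1)$ when $i+j$ is odd;
\item $R^{2,1}$ equal to $(0,-1)$ when $i+j$ is even and $(0,1)$ when $i+j$ is odd.
\end{itemize}
Then every row and column of each of these blocks already sums to zero. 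At each position $(i,j)$ the residues $1$ and $3\equiv -1 \pmod 4$ each occur exactly once across $R^{1,2}$ and $R^{2,1}$. The coset $1+4Z$ is covered by the values $m'_{i,j}+1$, which are pairwise distinct, and likewise the coset $3+4Z$ by the values $m'_{i,j}-1$. So these three blocks cover the cosets $0,1,3$ bijectively.
\item The back diagonal runs through $R^{1,2}$ and $R^{2,1}$. The parities of $i+j$ along it are constant inside each block, so its two halves contribute opposite values and cancel.
\end{enumerate}

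The main obstacle is the block $M^{2,2}$, which must cover the coset $2+4Z$ with zero row, column and diagonal sums. A constant residual $(0,2)$ fails, as noted above. A checkerboard $\pm2$ breaks bijectivity, because $m'+2=m''-2$ can occur. My first attempt would exploit the $Z_2$ coordinate: take $r^{2,2}_{i,j}=(1,2)$ on half the positions and $(1,2)$ shifted by a suitable element on the rest. Equivalently, I would use the freedom to replace $M'$ in that block by $M'$ composed with a permutation or automorphism of $\Gamma_0$ so that the odd part of the row sum, $(0,2^{\alpha})$, is absorbed. A concrete alternative I would also test is to let the $Z_2$ component of the residual alternate between $0$ and $1$ in the block $M^{2,2}$, and to compensate the resulting sum $2^{\alpha-1}(1,0)=0$ (which vanishes in $Z_2$ for $\alpha\ge2$) against a correction of the second coordinate by $2^{2\alpha}$, which is a multiple of $4$. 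Verifying that such a choice keeps the entries of $M^{2,2}$ a bijection onto $Z_2\oplus(2+4Z_{2^{2\alpha+1}})$ is the step where I expect the real work to lie.
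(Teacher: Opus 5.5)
Your treatment of $M^{1,1}$, $M^{1,2}$ and $M^{2,1}$ is sound. The checkerboards $\pm(0,1)$ put $m'_{i,j}+(0,1)$ and $m'_{i,j}-(0,1)$ into the two off-diagonal blocks, so together they cover $Z_2\oplus(1+4Z_{2^{2\alpha+1}})$ and $Z_2\oplus(3+4Z_{2^{2\alpha+1}})$ bijectively. Every row and column vanishes, and the back diagonal picks up $(0,-2^{\alpha})$ and $(0,2^{\alpha})$, which cancel.

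The proof is nevertheless incomplete at exactly the point you flag. You never produce a block $M^{2,2}$ containing each element of $Z_2\oplus(2+4Z_{2^{2\alpha+1}})$ once with zero row, column and main-diagonal sums, and the repairs you sketch do not work:
\begin{enumerate}
\item A constant residual $r$ in this coset adds $2^{\alpha}r$ to each row. Its second coordinate is $2^{\alpha+1}$ times an odd number, so it is nonzero. For $r=(0,2)$ the contribution is $(0,2^{\alpha+1})$, not $(0,2^{\alpha})$ as you wrote. Applying an automorphism to $M'$ does not change this contribution.
\item Altering entries by $2^{2\alpha}$ changes a row sum only by a multiple of $2^{2\alpha}$, which cannot cancel $2^{\alpha+1}$ when $\alpha\ge 2$.
\item Alternating the $Z_2$-coordinate is irrelevant, since those sums already vanish.
\item A checkerboard $\pm 2$ fails twice over. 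It can collide at the cells holding $(a,b)$ and $(a,b+1)$ in $M_\alpha$, about which your induction hypothesis says nothing. Also, the main diagonal of $M^{2,2}$ has $i+j$ even throughout, so it carries a constant sign and again sums to $\pm(0,2^{\alpha+1})$.
\end{enumerate}

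The missing idea, which is what the paper uses, is to stop translating $M'$ outside $M^{1,1}$. Instead, tile with $2\times 2$ blocks $B_k=\left(\begin{smallmatrix}(0,k)&(0,-k)\\(1,-k)&(1,k)\end{smallmatrix}\right)$ with $k\not\equiv 0\pmod 4$.
\begin{enumerate}
\item Each $B_k$ has row sums $(0,0)$, column sums $(1,0)$, and diagonal sums $(1,2k)$ and $(1,-2k)$.
\item A column of a subsquare meets $2^{\alpha-1}$ blocks, an even number, so column sums vanish.
\item Placing $B_k$ and $B_{2^{2\alpha}-k}$ on the same diagonal cancels the diagonal contributions.
\end{enumerate}
The paper fills all three subsquares $M^{1,2},M^{2,1},M^{2,2}$ this way, using every $k\in[1,2^{2\alpha}-1]$ with $k\not\equiv 0\pmod 4$.

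To salvage your own version, keep your $M^{1,2}$ and $M^{2,1}$ and fill only $M^{2,2}$ with the $2^{2\alpha-2}$ blocks having $k\equiv 2\pmod 4$. This works for two reasons:
\begin{enumerate}
\item As $k$ runs over this class, $\{k,-k\}$ runs over all of $2+4Z_{2^{2\alpha+1}}$, so the blocks cover the coset exactly.
\item The map $k\mapsto 2^{2\alpha}-k$ preserves the class and has no fixed point there, because $2^{2\alpha-1}\equiv 0\pmod 4$. Hence the $2^{\alpha-1}$ blocks on the main diagonal can be paired off.
\end{enumerate}
The $Z_2$-coordinate is what lets $k$ and $-k$ sit in different cells without collision, which is precisely what a pure $\pm 2$ residual cannot achieve.
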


\begin{proof} 
	We proceed by induction on $\alpha$. As in the proof of Lemma~\ref{lem: Z_2^c + Z_2^c+d} we enlarge a zero-sum square of side  $2^\alpha$ to one of side $2^{\alpha+1}$ using block decomposition. The base case for $\alpha=2$ is shown in Figure~\ref{fig: Z_2 x Z_8}.

	Let $M_\alpha=\{(a,b)_{i,j}\}_{i,j=1}^{2^\alpha}$ be a $\ams{Z_{2}\oplus Z_{2^{2\alpha-1}}}(2^\alpha)$, which exists by inductive hypothesis. We build an $M_{\alpha+1}=\ams{Z_{2}\oplus Z_{2^{2\alpha+1}}}(2^{\alpha+1})$, out of four subsquares $M^{s,t}\{(a,b)_{i,j}^{s,t}\}_{i,j=1}^{2^\alpha}, s,t\in[1,2]$. The subsquare $M^{1,1}$ is obtained from $M_\alpha$ by
	\begin{equation*}\label{eq:M^11}
		m^{1,1}_{i,j} = (a,4b)_{i,j}
	\end{equation*}
	for every $(a,b)_{i,j}\in M_\alpha$.
	The remaining subsquares $M^{s,t}$ are built from $2\times2$ blocks $B_k$ where $k\in[1,2^{2\alpha}-1]$ and $k\not\equiv0\pmod4$, defined as

	\begin{equation*}
			B_k=
		\begin{array}{|c|c|}
			\hline
			(0,k)				&(0,-k)\\
			\hline
			(1,-k)	&(1,k)	\\
			\hline
		\end{array}	
	\end{equation*}	
	with row sums $(0,0)$, column sums $(1,0)$ and diagonal sums $(1,2k)$ and $(1,-2k)$, respectively. There are $3\cdot2^{2\alpha-2}$ such squares and they are well-defined, because when $k\equiv1\pmod4$, then $-k\equiv3\pmod4$ and vice versa, and when $k\equiv2\pmod8$, then $-k\equiv6\pmod8$ and vice versa.
	
	We then place the blocks in the subsquares so that when $B_k$ is on the main or backward diagonal, then also $B_{2^{2\alpha}-k}$ is on the same diagonal. Notice that the diagonal sums in $B_{2^{2\alpha}-k}$ are
	\begin{equation*}\label{eq:main diag sums}
		(0,2^{2\alpha}-k) + (1,2^{2\alpha}-k) = (1,-2k)
	\end{equation*}
	for the main diagonal and
	\begin{equation*}\label{eq:back diag sums}
		(0,2^{2\alpha}+k) + (1,2^{2\alpha}+k) = (1,2k)
	\end{equation*}
	for the backward diagonal. Hence, the sum of the block diagonals of $B_{k}$ and $B_{2^{2\alpha}-k}$ is always $(0,0)$.
	
	The subsquares $M^{s,t}$ contain even number of blocks, which implies that also the row and column sums in each subsquare are $(0,0)$.
\end{proof}

\begin{exm}\label{exm: Z_2 x Z_32}
	In Figure~\ref{fig:example Z_2 x Z_32} we show the construction of $\ams{Z_{2}\oplus Z_{32}}(8)$.
\end{exm}

\begin{figure}[H]
$$
\begin{array}{||c|c|c|c||c|c|c|c||}
\hline
\hline
(0,8)  & (0,0) & (1,8) 	& (1,16)		&(0,1)  & (0,31)& (0,5)  & (0,27)\\ \hline
(1,24) & (1,0) & (0,24)	& (0,16)		&(1,31) & (1,1) & (1,27) & (1,5)\\ \hline
(1,4)  & (1,28)& (1,12)	& (1,20)		&(0,11) & (0,21)& (0,15) & (0,17)\\ \hline
(0,28) & (0,4) & (0,20)	& (0,12)		&(1,21) & (1,11)& (1,17) & (1,15)\\ \hline
\hline
(0,3)  & (0,29)& (0,7) 	& (0,25)		&(0,2)  & (0,30)& (0,6)  & (0,26)\\ \hline
(1,29) & (1,3) & (1,25) & (1,7)			&(1,30) & (1,2) & (1,26) & (1,6)\\ \hline
(0,9)  & (0,23)& (0,13) & (0,19)		&(0,10) & (0,22)& (0,14) & (0,18)\\ \hline
(1,23) & (1,9) & (1,19) & (1,13)		&(1,22) & (1,10)& (1,18) & (1,14)\\ \hline
\hline
\end{array}
$$
\caption{$\ams{Z_{2}\oplus Z_{32}}(8)$}	
 	\label{fig:example Z_2 x Z_32}
 \end{figure}


\begin{lemma}\label{obs: Z_2 + Z_2+ Z_2^c}
 	Let $\Gamma= H \oplus Z_{2^{2\alpha-2}}$ for $|H|=4$ and $\alpha\geq2$. Then there exists a $\agms(2^\alpha)$.
\end{lemma}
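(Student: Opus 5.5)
We would prove the statement by induction on $\alpha$, in the same way as Lemma~\ref{obs: Z_2 + Z_2^c}. Up to isomorphism $H\cong Z_4$ or $H\cong Z_2\oplus Z_2$, and we handle both at once. In either case we fix a subgroup $\{0,g\}\le H$ of order $2$ and an element $u\in H\setminus\{0,g\}$, so that $H=\{0,g\}\cup\{u,u+g\}$. For $Z_4$ take $g=2$, $u=1$; for $Z_2\oplus Z_2$ take $g=(1,0)$, $u=(0,1)$.

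\emph{Base case $\alpha=2$.} Here $\Gamma$ is $Z_4\oplus Z_4$ or $Z_2\oplus Z_2\oplus Z_4$. We would exhibit two explicit $4\times 4$ zero-sum squares, found by hand or by a short search, in the spirit of Figure~\ref{fig: Z_2 x Z_8}. This is the step we expect to be the main obstacle, since it needs a concrete example for each group. For $Z_4\oplus Z_4$ we would first try combining a Latin-square-type pattern such as $(i-1,j-1)$ with suitable row and column permutations.

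\emph{Inductive step.} Let $M_\alpha$ be a $\ams{\Gamma}(2^\alpha)$ with $\Gamma=H\oplus Z_{2^{2\alpha-2}}$. We build $M_{\alpha+1}$ over $H\oplus Z_{2^{2\alpha}}$ from four subsquares of side $2^\alpha$.
\begin{enumerate}
\item Set $M^{1,1}$ to be $M_\alpha$ with every entry $(h,b)$ replaced by $(h,4b)$. Its entries are exactly the elements with second coordinate $\equiv 0\pmod 4$, and all its relevant sums remain zero.
\item Fill the other three subsquares with the remaining $3\cdot 2^{2\alpha}$ elements, namely those with second coordinate $k\not\equiv 0\pmod 4$. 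Group them into the $2\times2$ blocks
\[
B_{k,x}=\begin{array}{|c|c|}\hline (x,k)&(x,-k)\\\hline (x+g,-k)&(x+g,k)\\\hline\end{array},\qquad x\in\{0,u\},
\]
where $k$ runs over one representative of each pair $\{k,-k\}$. These pairs are genuine pairs, since $k=-k$ would force $k\in\{0,2^{2\alpha-1}\}$, and both are $\equiv 0\pmod 4$ when $\alpha\ge 2$. Every element is then used exactly once.
\end{enumerate}

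\emph{Row and column sums.} A block contributes $(2x,0)$ and $(2x+2g,0)=(2x,0)$ to its two rows, and $(g,0)$ to each of its columns; note $2x\in\{0,2g\}$ has order at most $2$. Each row or column of a subsquare meets $2^{\alpha-1}$ blocks, and $2^{\alpha-1}$ is even for $\alpha\ge2$. Hence all row and column sums of each subsquare are $(0,0)$.

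\emph{Diagonals.} The main-diagonal contribution of $B_{k,x}$ is $(2x+g,2k)$ and the back-diagonal contribution is $(2x+g,-2k)$. As in Lemma~\ref{obs: Z_2 + Z_2^c}, we place blocks on each diagonal in pairs $B_{k,x}$ and $B_{2^{2\alpha}-k,x}$, i.e.\ using the block whose roles of $k$ and $-k$ are swapped. Their contributions sum to $(4x+2g,0)=(0,0)$. Diagonal cells lying in $M^{1,1}$ already sum to zero.

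To finish, we need to check that the blocks can be distributed with enough such pairs along both diagonals of the three subsquares. There are $3\cdot2^{2\alpha-2}$ blocks, and both diagonals of $M^{1,2}$, $M^{2,1}$ and $M^{2,2}$ need an even number of blocks each, so we expect this to be a routine counting argument.
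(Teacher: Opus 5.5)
Your proposal has two genuine errors in the inductive step, plus an unfinished base case. Both errors are fixable, but not by the arguments you give.

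\textbf{The diagonal pairing.} In the inductive step the cyclic factor is $Z_{2^{2\alpha}}$, so $2^{2\alpha}-k=-k$. Hence $B_{-k,x}$ is just $B_{k,x}$ with its two columns swapped: it consists of exactly the same four elements $(x,\pm k),(x+g,\pm k)$. Since you took one representative of each pair $\{k,-k\}$, this ``partner'' is not in your family at all. Using both blocks would repeat four elements and leave four others unused. The index $2^{2\alpha}-k$ works in the lemma for $Z_2\oplus Z_{2^{2\alpha-1}}$ only because there the new modulus is $2^{2\alpha+1}$, so it is a shift by half the modulus rather than a negation. The correct partner here is $B_{2^{2\alpha-1}-k,x}$. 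Its second coordinates $2^{2\alpha-1}\pm k$ differ from $\pm k$, because $2k\neq 2^{2\alpha-1}$ when $k\not\equiv 0\pmod 4$. Its diagonal contributions $(2x+g,-2k)$ and $(2x+g,2k)$ cancel those of $B_{k,x}$.

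\textbf{Rows and columns when $H\cong Z_4$.} The columns of $B_{k,x}$ sum to $(2x+g,0)$, not $(g,0)$. In $Z_4$ we have $2u=g\neq 0$, so your claim $2x\in\{0,2g\}$ fails (note $2g=0$). Thus $B_{k,0}$ contributes $(0,0)$ to rows and $(g,0)$ to columns, whereas $B_{k,u}$ contributes $(g,0)$ to rows and $(0,0)$ to columns. An even number of elements of order at most $2$ need not sum to zero when they are not all equal. Concretely, for $\alpha=2$, making every line of every subsquare vanish forces each subsquare (a $2\times2$ array of blocks) to use a single value of $x$. That is impossible with six blocks of each kind and three subsquares of four blocks each. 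For $Z_4$ you can repair this by transposing every $B_{k,u}$; transposition preserves both diagonals, and afterwards every block contributes $(0,0)$ to rows and $(g,0)$ to columns. With both repairs your $2\times2$-block scheme goes through, and the placement count is then indeed routine.

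\textbf{Base case.} This is still open, and your suggested route for $Z_4\oplus Z_4$ cannot work. After any row and column permutation of $(i-1,j-1)$, each row is still a coset $\{a\}\times Z_4$, whose sum is $(0,2)$. The paper simply exhibits both $4\times4$ squares.

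\textbf{Comparison with the paper.} The paper avoids all of this bookkeeping. It uses $4\times4$ blocks on $H\times\{\pm k,2^{2\alpha-1}\pm k\}$ that are themselves zero-sum in every row, every column and both diagonals. Any tiling of the three subsquares by these blocks therefore works.
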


\begin{proof} 
	Observe that $H=Z_2\oplus Z_2$ or $H=Z_4$. We proceed by induction on $\alpha$.  The base cases for $\alpha=2$ are shown in Figure~\ref{fig: Z_2 x Z_2 x Z_4 1}.
\begin{figure}[H]
\begin{center}
\begin{tabular}{|c|c|c|c|}
\hline
  (0,0,0)&(0,0,1)&(1,0,0)&(1,0,3)\\\hline
  (0,0,3)&(0,0,2)&(1,0,1)&(1,0,2)\\\hline
(1,1,0)&(1,1,3)&(0,1,0)&(0,1,1)\\\hline
(1,1,1)&(1,1,2)&(0,1,3)&(0,1,2)\\\hline
\end{tabular} ~~~~~~~~\begin{tabular}{|c|c|c|c|}
\hline
  (2,0)&(0,1)&(3,0)&(3,3)\\\hline
  (2,3)&(0,2)&(1,1)&(1,2)\\\hline
(1,0)&(1,3)&(0,0)&(2,1)\\\hline
(3,1)&(3,2)&(0,3)&(2,2)\\\hline

  \end{tabular} 
\end{center}
\caption{$\ams{Z_2\oplus Z_2\oplus Z_4}(4)$ and $\ams{Z_4\oplus Z_4}(4)$ }
\label{fig: Z_2 x Z_2 x Z_4 1}
\end{figure}
 
	Now let $M_\alpha=\{(a,b)_{i,j}\}_{i,j=1}^{2^\alpha}$ be a $\ams{H\oplus  Z_{2^{2\alpha-2}}}(2^\alpha)$, which exists by inductive hypothesis. We build an $M_{\alpha+1}=\ams{H\oplus Z_{2^{2\alpha}}}(2^{\alpha+1})$ out of four subsquares $M^{s,t}\{(a,b)_{i,j}^{s,t}\}_{i,j=1}^{2^\alpha}, s,t\in[1,2]$. The subsquare $M^{1,1}$ is obtained from $M_\alpha$ by
	\begin{equation*}\label{eq:M^11}
		m^{1,1}_{i,j} = (a,4b)_{i,j}
	\end{equation*}
	for every $(a,b)_{i,j}\in M_\alpha$. 
	
Let $H=\{0,a_1,a_2,a_3\}$. Without loss of generality, we may assume that  $2a_1=0$ and $a_3=a_1+a_2$.  The remaining subsquares $M^{s,t}$ are built from $4\times4$ blocks $B_k$ where $k\in[1,\ldots,2^{2\alpha-2}-1]$ and $k\not\equiv0\pmod4$, defined as follows.

	\begin{equation*}
			B_k=
	\begin{array}{|c|c|c|c|}
\hline
  (0,k) &(0,-k) &(a_1,k)&(a_1,-k)\\\hline
  (0,2^{2\alpha-1}+k)   &(0,2^{2\alpha-1}-k)   &(a_1,2^{2\alpha-1}+k)&(a_1,2^{2\alpha-1}-k)\\\hline
 (a_2,-k)&(a_2,k)  &(a_3,-k)&(a_3,k)\\\hline
  (-a_2,2^{2\alpha-1}-k)  &(-a_2,2^{2\alpha-1}+k)    &(-a_3,2^{2\alpha-1}-k)&(-a_3,2^{2\alpha-1}+k)\\\hline
 
\end{array} 
	\end{equation*}	
	with row sums, column sums  and diagonal sums equal $(0,0)$, since either $2a_2=2a_3=0$ when $H=Z_2\oplus Z_2$, or $a_2+a_3=0$ when $H=Z_4$.  This finishes the proof.
\end{proof}

\begin{exm}\label{exm: Z_2 x Z_32}
	In Figure~\ref{fig:example Z_2 x Z_2 x Z_16} we show the construction of $\ams{Z_{2}\oplus Z_2 \oplus Z_{16}}(8)$.
\end{exm}

\begin{figure}[H]
$$
\begin{array}{||c|c|c|c||c|c|c|c||}
\hline
\hline
  (0,0,0)&(0,0,4)&(1,0,0)&(1,0,12)&(0,0,1)   &(0,0,15)   &(1,0,1)&(1,0,15)\\\hline
  (0,0,12)&(0,0,8)&(1,0,4)&(1,0,8)&(0,0,9)   &(0,0,7)   &(1,0,9)&(1,0,7)\\\hline
(1,1,0)&(1,1,12)&(0,1,0)&(0,1,4)& (1,1,15)&(1,1,1)  &(0,1,15)&(0,1,1)\\\hline
(1,1,4)&(1,1,8)&(0,1,12)&(0,1,8)&(1,1,7)&(1,1,9)  &(0,1,7)&(0,1,9)\\\hline\hline
  (0,0,2) &(0,0,14) &(1,0,2)&(1,0,14)&(0,0,3) &(0,0,13) &(1,0,3)&(1,0,13)\\\hline
  (0,0,10)   &(0,0,6)   &(1,0,10)&(1,0,6)&(0,0,11)   &(0,0,5)   &(1,0,11)&(1,0,5)\\\hline
 (1,1,14)&(1,1,2)  &(0,1,14)&(0,1,2)&(1,1,13)&(1,1,3)  &(0,1,13)&(0,1,3)\\\hline
  (1,1,6)  &(1,1,10)    &(0,1,6)&(0,1,10)& (1,1,5)  &(1,1,11)    &(0,1,5)&(0,1,11)\\\hline
\hline
\end{array}
$$
\caption{$\ams{Z_{2}\oplus Z_2 \oplus Z_{16}}(8)$}	
 	\label{fig:example Z_2 x Z_2 x Z_16}
 \end{figure}


\begin{theorem}\label{thm:order 2^s}
	Let $|\Gamma| = 2^{2\alpha}$, $\Gamma\in \gr$ and $\alpha\geq2$. Then there exists a {$\agms(2^{\alpha})$}.
\end{theorem}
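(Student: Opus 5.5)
We argue by induction on $\alpha$, with the structure of $\Gamma$ split according to the Fundamental Theorem of Finite Abelian Groups. Since $|\Gamma|=2^{2\alpha}$ and $\Gamma\in\gr$, $\Gamma$ is not cyclic, so $\Gamma\cong Z_{2^{e_1}}\oplus\cdots\oplus Z_{2^{e_r}}$ with $r\ge 2$, $e_1\le\cdots\le e_r$ and $\sum e_i=2\alpha$.

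\textbf{Base case $\alpha=2$ (groups of order $16$ in $\gr$).} These are $Z_2\oplus Z_8$, $Z_4\oplus Z_4$, $Z_2\oplus Z_2\oplus Z_4$ and $Z_2^4$. The first is Figure~\ref{fig: Z_2 x Z_8}, and the second and third are Figure~\ref{fig: Z_2 x Z_2 x Z_4 1}. For $Z_2^4=Z_2^2\oplus Z_2^2$ we need a separate square. One option is to take $b_{i,j}=(x_i,y_j)$ with $x_i,y_j$ running over $Z_2^2$; each row and column then sums to zero because $\sum_{x\in Z_2^2}x=0$. For the diagonals we order the $x_i$ and $y_j$ suitably, or simply exhibit an explicit $4\times 4$ array. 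This is a finite check.

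\textbf{Inductive step, $\alpha\ge3$.} Write $\Gamma=Z_{2^{e_1}}\oplus H$, where $Z_{2^{e_1}}$ is a smallest cyclic factor. We distinguish the following cases.

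\emph{(i) $r=2$.} Here $\Gamma=Z_{2^a}\oplus Z_{2^b}$ with $a\le b$ and $a\ge1$.
\begin{itemize}
\item If $a=1$, Lemma~\ref{obs: Z_2 + Z_2^c} applies directly.
\item If $a=2$, Lemma~\ref{obs: Z_2 + Z_2+ Z_2^c} applies with $H=Z_4$.
\item If $a\ge3$, apply Lemma~\ref{lem: Z_2^c + Z_2^c+d} with $\alpha$ there equal to $a$ and $\gamma=\alpha$. Then $\Gamma_0=Z_{2^{a-2}}\oplus Z_{2^b}$ has order $2^{2(\alpha-1)}$ and is non-cyclic (since $a-2\ge1$), so it lies in $\gr$. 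The induction hypothesis supplies $\ams{\Gamma_0}(2^{\alpha-1})$.
\end{itemize}

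\emph{(ii) $r\ge3$.} We split into two subcases.
\begin{itemize}
\item If $\Gamma\cong Z_2\oplus Z_2\oplus Z_{2^{2\alpha-2}}$, use Lemma~\ref{obs: Z_2 + Z_2+ Z_2^c}.
\item Otherwise, we want to apply Lemma~\ref{lem: Z_2^c + Z_2^c+d} with the factor $Z_{2^{e}}$ chosen to have $e\ge2$. Its hypothesis $e\le 2\alpha-e$ holds because $e\le e_r$ and some other factor is nontrivial, except in degenerate cases we handle separately. Then $\Gamma_0=Z_{2^{e-2}}\oplus H$ has order $2^{2\alpha-2}$ and still has at least two cyclic factors, since $H$ itself has $\ge2$ factors. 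So $\Gamma_0\in\gr$ and induction applies.
\end{itemize}

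\emph{(iii) All $e_i=1$.} Here $\Gamma=Z_2^{2\alpha}$ and Lemma~\ref{lem: Z_2^c + Z_2^c+d} cannot reduce anything. Instead use Lemma~\ref{lem:SWL2} with $\Gamma_0=Z_2^4$ (the base case, $m=4$) and $H=Z_2^{2\alpha-4}$. The required $H$-Kotzig array of size $4\times 2^{2\alpha-4}$ exists by Theorem~\ref{thm:Kotzig}, because $4$ is even. More generally, Lemma~\ref{lem:SWL2} gives a uniform fallback whenever $\Gamma$ splits as a base group of order $16$ in $\gr$ plus a complement: Kotzig arrays with an even number $m=2^{\alpha'}$ of rows always exist.

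\textbf{Main obstacle.} The delicate parts are the explicit zero-sum square for $Z_2^4$ and the bookkeeping showing that every group in $\gr$ of order $2^{2\alpha}$ falls into one of the cases. In particular, the condition $\alpha\le\beta$ in Lemma~\ref{lem: Z_2^c + Z_2^c+d}, i.e.\ $e\le\alpha$, must be checked, and this is why we choose the factor of smallest exponent at least $2$.

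If that bookkeeping gets messy, the simplest route is to use Lemma~\ref{lem:SWL2} whenever $\Gamma$ has a direct summand $\Gamma_0$ of order $16$ in $\gr$. This reduces everything except the two-factor case $Z_{2^a}\oplus Z_{2^b}$, which is handled by (i).
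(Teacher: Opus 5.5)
\textbf{Gap in case (ii).} Lemma~\ref{lem: Z_2^c + Z_2^c+d} needs a cyclic factor $Z_{2^e}$ with $2\le e\le\alpha$. That fails exactly when the only factor of exponent at least $2$ is the last one, i.e.\ $\Gamma\cong Z_2^{r-1}\oplus Z_{2^e}$ with $e\ge r$. You cover $r-1=2$ by Lemma~\ref{obs: Z_2 + Z_2+ Z_2^c} and $r-1\ge 4$ by the $Z_2^4$-summand fallback. The family $\Gamma\cong Z_2\oplus Z_2\oplus Z_2\oplus Z_{2^{2\alpha-3}}$ with $\alpha\ge 4$ is not covered; the smallest example is $Z_2^3\oplus Z_{32}$, side $16$. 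For this family:
\begin{enumerate}
\item $2\alpha-3>\alpha$, so Lemma~\ref{lem: Z_2^c + Z_2^c+d} does not apply.
\item It is not of the form in Lemma~\ref{obs: Z_2 + Z_2+ Z_2^c}.
\item It has no non-cyclic direct summand of order $16$. By uniqueness of the cyclic decomposition, such a summand would be a sub-collection of the factors with exponents $1,1,1,2\alpha-3$ whose exponents sum to $4$, and there is none.
\end{enumerate}
So the ``degenerate cases we handle separately'' are not all handled. Your closing claim that the order-$16$ fallback reduces everything except the two-factor case is also false, for the same family.

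\textbf{The missing idea.} Use Lemma~\ref{lem:SWL2} in the opposite direction: the large group carries the zero-sum square, and the group of order $4$ is the Kotzig component. Here take $\Gamma_0=Z_2\oplus Z_{2^{2\alpha-3}}$. It has a $\ams{\Gamma_0}(2^{\alpha-1})$ by Lemma~\ref{obs: Z_2 + Z_2^c}, or by induction. Take $H=Z_2\oplus Z_2$; a Kotzig array of size $2^{\alpha-1}\times 4$ exists by Theorem~\ref{thm:Kotzig}. This is precisely the paper's first case. Whenever $\beta_1=\beta_2=1$ or $\beta_1=2$, the paper splits off a subgroup of order $4$ and applies induction to the complement $K$. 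It uses Lemma~\ref{obs: Z_2 + Z_2+ Z_2^c} only when $K$ is cyclic.

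\textbf{The rest is sound.} With this repair, your case analysis goes through. Applying Lemma~\ref{lem: Z_2^c + Z_2^c+d} to the smallest exponent $e\ge 2$ is legitimate whenever that factor is not the unique one of exponent $\ge 2$. This includes $e=2$, where $\Gamma_0$ is just the complement. Your product square for $Z_2^4$ works for every ordering of the $x_i$ and $y_j$, since each diagonal also contains every $x$ and every $y$ exactly once.
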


\begin{proof}
	The proof will be by induction on $|\Gamma|$. Let {$\alpha=2$}, then the $\ams{\Gamma}(4)$ for $\Gamma\in\{Z_2\oplus Z_8,Z_4\oplus Z_4,Z_2\oplus Z_2\oplus Z_4,Z_2\oplus Z_2\oplus Z_2\oplus Z_2\}$ are shown in Figures~\ref{fig: Z_2 x Z_8 and Z_4 x Z_4}--\ref{fig: Z_2^4}.

	Thus assume that $\alpha\geq 3$. Let $\Gamma=Z_{2^{\beta_1}}\oplus Z_{2^{\beta_2}}\oplus\dots\oplus Z_{2^{\beta_t}}$ and $\beta_1\leq\beta_2\leq\dots\leq\beta_t$. Since $\Gamma\in\gr$, there is $t>1$.

	When $\beta_1=\beta_2=1$ or $\beta_1=2$  we have {$\Gamma=H\oplus K$} for $|H|=4$.  	Because $\alpha\geq3$, we have {$|K|\geq 2^{4}$}.  If  now $K\in \gr$, then there exists a $\ams{K}(2^{\alpha-1})$ by inductive hypothesis and an $H$-Kotzig array of size $2^{\alpha-1}\times 4$ based on Theorem~\ref{thm:Kotzig}. The existence of $\agms(2^\alpha)$ then follows from Lemma~\ref{lem:SWL2}. 
	Suppose that $K\notin \gr$, then $\Gamma={Z_{2}\oplus Z_{2}}\oplus Z_{2^{2\alpha-2}}$ or $\Gamma={Z_{4}}\oplus Z_{2^{2\alpha-2}}$  for $\alpha\geq{3}$. Thus we apply Lemma~\ref{obs: Z_2 + Z_2+ Z_2^c}. 
 
	Suppose now that $\beta_1=1$ and $\beta_2>1$.  Then there exists an index $i$ such that $\beta_{i}$ is also odd and greater than one.  Therefore  $\Gamma=H\oplus K$ where $H=Z_{2}\oplus Z_{2^{\beta_i}}$ and {$|H|=\beta_i+1>2$ is even}. 
	The existence of a zero-sum $H$-magic square follows from Lemma~\ref{obs: Z_2 + Z_2^c}.   If $t=2$, then $|K|=1$ and the claim follows immediately. For $t>1$ there is $|K|$ even and  there exists a $K$-Kotzig array of size $2^{(\beta_1+\beta_i)/2}\times |K|$ by Theorem~\ref{thm:Kotzig}. The existence of $\agms(2^\alpha)$ again follows from Lemma~\ref{lem:SWL2}.

	Finally we can assume that $\beta_1\geq 3$. Let $H=Z_{2^{\beta_2}}\oplus Z_{2^{\beta_3}}\oplus\dots\oplus Z_{2^{\beta_t}}$ and $\lambda=\sum_{i=2}^t \beta_i$. Denote $\Gamma_0=Z_{2^{\beta_1-2}}\oplus H$ and observe that $\Gamma_0\in \gr$ and $\lambda\geq \beta_1$.  By inductive hypothesis there exists a 
	$\ams{\Gamma_0}(2^{\alpha-1})$   and by Lemma~\ref{lem: Z_2^c + Z_2^c+d} there exists a 
	$\ams{\Gamma}(2^{\alpha})$.  This finishes the proof.
\end{proof}

\begin{figure}[H]
\begin{subfigure}[t]{0.5\textwidth}
\begin{center}
\begin{tabular}{|c|c|c|c|}
\hline
  (0,2)  & (0,0) & (1,2) & (1,4)\\ \hline
    (1,6)  & (1,0) & (0,6) & (0,4)\\ \hline
    (1,1)  & (1,7) & (1,3) & (1,5)\\ \hline
    (0,7)  & (0,1) & (0,5) & (0,3)\\ \hline
  \end{tabular}
\subcaption{$\ams{Z_2\oplus Z_8}$}
\end{center}
\end{subfigure}
\hfill
\begin{subfigure}[t]{0.5\textwidth}
\begin{center}
\begin{tabular}{|c|c|c|c|}
\hline
  (2,0)&(0,1)&(3,0)&(3,3)\\\hline
  (2,3)&(0,2)&(1,1)&(1,2)\\\hline
(1,0)&(1,3)&(0,0)&(2,1)\\\hline
(3,1)&(3,2)&(0,3)&(2,2)\\\hline

  \end{tabular} 
\subcaption{$\ams{Z_4\oplus Z_4}(4)$}
\end{center}
\end{subfigure}
\caption{}
\label{fig: Z_2 x Z_8 and Z_4 x Z_4}
\end{figure}

\vskip10pt

\begin{figure}[H]
\begin{center}
\begin{tabular}{|c|c|c|c|}
\hline
  (0,0,0)&(0,0,1)&(1,0,0)&(1,0,3)\\\hline
  (0,0,3)&(0,0,2)&(1,0,1)&(1,0,2)\\\hline
(1,1,0)&(1,1,3)&(0,1,0)&(0,1,1)\\\hline
(1,1,1)&(1,1,2)&(0,1,3)&(0,1,2)\\\hline
\end{tabular} 
\end{center}
\caption{$\ams{Z_2\oplus Z_2\oplus Z_4}(4)$}
\label{fig: Z_2 x Z_2 x Z_4}
\end{figure}

\vskip10pt
\begin{figure}[H]
\begin{center}
\begin{tabular}{|c|c|c|c|}
\hline
    (0,0,0,0)  & (0,1,0,0) & (0,0,0,1) & (0,1,0,1)\\ \hline
    (1,1,0,0)  & (1,0,0,0) & (1,1,0,1) & (1,0,0,1)\\ \hline
    (0,0,1,0)  & (0,1,1,0) & (0,0,1,1) & (0,1,1,1)\\ \hline
    (1,1,1,0)  & (1,0,1,0) & (1,1,1,1) & (1,0,1,1)\\ \hline
\end{tabular} 
\end{center}
\caption{$\ams{Z_2\oplus Z_2\oplus Z_2\oplus Z_2}(4)$}
\label{fig: Z_2^4}
\end{figure}


\subsection{Main result} \label{sec:main result}

We start with an easy observation. 
\begin{obs}\label{involution}Let $\Gamma$ be an Abelian group of order $n^2$. If $\Gamma \notin \gr$, then there does not exist a $\agms(n)$.\label{gr}
\end{obs}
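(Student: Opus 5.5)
If $\Gamma\notin\gr$, then $\Gamma$ has even order and contains exactly one involution, which we call $\iota$. The plan is a counting argument: add up all the row sums of a hypothetical zero-sum square and compare the result with the sum of all elements of $\Gamma$.

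The first step is the classical fact that the sum of all elements of a finite Abelian group equals the sum of its involutions. Pair each element $g$ with $g\neq -g$ with its inverse $-g$; each such pair contributes $0$ to the total. The elements left unpaired are those with $g=-g$, namely $0$ and the involutions. Since $\Gamma$ has exactly one involution, we get
$$\sum_{g\in\Gamma} g=\iota\neq 0.$$

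The second step is to suppose, for contradiction, that a $\agms(n)$ exists. Each element of $\Gamma$ appears exactly once in the array, so the $n$ rows partition $\Gamma$. Each row sums to $0$, so adding all row sums gives
$$\sum_{g\in\Gamma} g=\sum_{i=1}^n \rho_i=n\cdot 0=0.$$
This contradicts the first step, so no $\agms(n)$ exists. The diagonal conditions are not needed; the row condition alone rules out the square.

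The only step needing care is the pairing argument in the first step, and it is elementary. Note that the argument uses only the hypothesis $\Gamma\notin\gr$, not the value of $n$ beyond the fact that $\Gamma$ has even order.
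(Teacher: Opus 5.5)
Your proposal is correct and follows essentially the same argument as the paper: compare the total of all entries, which is $\sum_{g\in\Gamma}g=\iota$, with the sum of the $n$ zero row sums, which is $0$. The only difference is that you spell out the pairing argument behind $\sum_{g\in\Gamma}g=\iota$, which the paper simply asserts.
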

\begin{proof} Note that $n$ is even since $\Gamma \notin \gr$, and moreover there exists exactly one involution $\iota\in \Gamma$ and thus $\sum_{g\in \Gamma}g=\iota$. For the contrary assume that there exists a $\agms(n)=\{a_{i,j}\}_{i,j=1}^{n}$ with the   magic constant $\mu=0$. Then on one hand $\sum_{i=1}^n\sum_{j=1}^na_{i,j}= 
\sum_{i=1}^n\mu=0$, and on the other $\sum_{i=1}^n\sum_{j=1}^na_{i,j}=\sum_{g\in \Gamma}g=\iota$, a contradiction.
\end{proof}

We summarize now our previous results in a single theorem.

\CoPthm*

\begin{proof}
    Recall that there is no $\Gamma$-magic square $\gms(2)$. Moreover by Observa\-tion~\ref{gr} we can assume that $\Gamma\in \gr$. If now $|\Gamma| = 2^{2\alpha}$, then we are done by Theorem~\ref{thm:order 2^s}. Therefore we can assume that  there exists a prime number $p>2$ that divides $n$. By Fundamental Theorem of Finite Abelian Groups the group  $\Gamma\cong H\oplus K$, where $|H|=p^{2\lambda}$ for some  integer $\lambda\ge 1$ and $K\in \gr$. By Theorem~\ref{thm:main-odd} there exists a $\ams{H}(p^{\lambda})$ and by Theorem~\ref{thm:Kotzig} there exists a {$K$-Kotzig array}  of size $p^{\lambda}\times |K|$. Thus  we apply Lemma~\ref{lem:SWL2}.\end{proof}

\section{Conclusion and Open Problems}\label{sec:conclusion}

In this paper we proved that a
non-trivial zero-sum $\Gamma$-magic square exists if and only if
$n>2$ and $\Gamma \in \gr$. Thus, the zero-sum case (that is, magic constant
$\mu = 0$) is now completely characterized.

Nevertheless, several natural questions remain open. One direction concerns the structure of possible magic constants.
Observe that in some cases a $\ms{\Gamma}(n)$ with a nonzero magic constant
$\mu$ can be transformed into a zero-sum $\ms{\Gamma}(n)$ by a uniform
translation. Indeed, if there exists an element $x \in \Gamma$ such that $-\mu = nx$, then adding $x$ to each entry of the square produces a~$\zms{\Gamma}(n)$.

\begin{exm}\label{exm: Z_27 x Z_3}
In Figure~\ref{additive MS}, for a $\ms{Z_9}(3)$ the magic constant is
$\mu=3$. Since $-\mu = 3x$ with $x = 2$, adding $2$ to all elements of
the array yields a zero-sum square $B=\ams{Z_9}(3)$.
\end{exm}

\begin{figure}[h]
\begin{subfigure}[t]{0.5\linewidth}
$$
A=\begin{array}{|c|c|c|}
\hline
	7& 0& 5\\\hline
	2& 4& 6\\\hline
	3& 8& 1\\\hline
\end{array}
$$
\caption{An $\ms{Z_9}(3)$ with $\mu=3$}
\end{subfigure}
\hfill
\begin{subfigure}[t]{0.5\linewidth}
$$
B=\begin{array}{|c|c|c|}
\hline
	0& 2& 7\\\hline
	4& 6& 8\\\hline
	5& 1& 3\\\hline
\end{array}
$$
\caption{A $\zms{Z_9}(3)$}
\end{subfigure}
\caption{An $\ms{Z_9}(3)$ and a $\ams{Z_9}(3)$}
\label{additive MS}
\end{figure}

However, such a translation is not always possible, as shown in
Figure~\ref{Z_2 x Z_8}.

\begin{figure}[H]
\begin{center}
\begin{tabular}{|c|c|c|c|}
\hline
    (0,0)  & (0,1) & (0,3) & (0,2)\\ \hline
    (0,7)  & (0,6) & (0,4) & (0,5)\\ \hline
    (1,0)  & (1,1) & (1,3) & (1,2)\\ \hline
    (1,7)  & (1,6) & (1,4) & (1,5)\\ \hline
\end{tabular}
\caption{$\ms{Z_2\oplus Z_8}(4)$ with the magic constant $\mu=(0,6)$}
\label{Z_2 x Z_8}
\end{center}
\end{figure}

This observation leads to the following natural problem.

\begin{oprb}
Let $\Gamma$ be an Abelian group of order $n^2>4$.
Determine all elements $\mu \in \Gamma$ for which there exists a
$\Gamma$-magic square of side $n$ with magic constant $\mu$.
\end{oprb}

For an Abelian group $\Gamma$ of order $n^2>4$, we can define the
\emph{spectrum of magic constants} by
\[
\Spec_{\Gamma}(n)
=
\left\{
\mu \in \Gamma \;:\;
\text{there exists a $\gms(n)$ with magic constant $\mu$}
\right\}.
\]

A natural problem is to characterize $\Spec_{\Gamma}(n)$ for arbitrary
Abelian groups $\Gamma$. In particular, it would be interesting to
determine whether $\Spec_{\Gamma}(n)$ has additional algebraic structure
(for example, whether it forms a subgroup or a coset of a subgroup of
$\Gamma$) and how it depends on the subgroup $n\Gamma = \{nx : x \in \Gamma\}$.

\section{Statements and Declarations}
The work of the first author was    supported by  AGH University of Krakow under grant no. 16.16.420.054,
funded by the Polish Ministry of Science and Higher Education.

\vskip1cm

\noindent







\begin{thebibliography}{00}

\bibitem{BCMMPW}
J. W. Brown, F. Cherry, L. Most, M. Most, E. T. Parker, W. D. Wallis, 
Completion of the spectrum of orthogonal diagonal Latin squares,
in \textit{Graphs, matrices, and designs,} 
43--49, 
Lecture Notes in Pure and Appl. Math., \textbf{139} (1993).

\bibitem{BurattiMerolaNakic2025} 
M.  Buratti, F. Merola, A. Nakic, Additive combinatorial designs. \textit{Des. Codes Cryptogr.} \textbf{93} (2025), 2717--2740.

\bibitem{Marco2}
M. Buratti, M. Galici, A. Montinaro, A. Nakic, A. Wassermann, $EA(q)$-additive Steiner 2-designs, Preprint arXiv:2511.01073 (2025).

\bibitem{Marco3} M. Buratti, A. Pasotti, Heffter spaces, \textit{Finite Fields Appl.} \textbf{98} (2024), Article number 102464.

\bibitem{Marco4} M. Buratti, A. Pasotti, Shiftable Heffter spaces, \textit{Des. Codes Cryptogr.} 93 (2025), 3863--3874.

   \bibitem{CaggegiFalconePavone2017} 
 A. Caggegi, G. Falcone, M. Pavone, On the additivity of block designs, \textit{J. Algebr. Comb.} \textbf{45} (2017), 271--294.



\bibitem{CicZ}  S. Cichacz,  {Zero sum partition into sets of the same order and its applications},   \textit{Electron. J.
Combin.} \textbf{25(1)} (2018), \#P1.20.

\bibitem{CicJaco}
S. Cichacz, Partition of Abelian groups into zero-sum sets by complete mappings and its application to the existence of a magic rectangle set, \textit{J. Algebr. Comb.} \textbf{61(24)} (2025) 112815

\bibitem{CicSur}
S. Cichacz, Disjoint zero-sum subsets in Abelian groups and theirs application -- survey, accepted for publication in \textit{Bolyai Society Mathematical Studies} (2025).

\bibitem{CicFro}
S. Cichacz, D. Froncek, {Magic squares on Abelian groups}, \textit{Discrete Math.} \textbf{349(7)} (2026), 115033.

\bibitem{Cichacz-Hincz-2}
S. Cichacz, T. Hinc,
A magic rectangle set on Abelian groups and its application,
\textit{Discrete Appl. Math.} \textbf{288} (2021), 201--210.


\bibitem{handbook}
C. J. Colbourn, J. H. Dinitz, eds.,
\textit{Handbook of combinatorial designs}, second edn.,
\textsc{Discrete Mathematics and its Applications (Boca Raton)}, 
Chapman \& Hill/CRC Press, Boca Raton, FL 2007.



\bibitem{ErdGinZiv}
P. Erdős, A. Ginzburg,  A. Ziv,  Theorem in the additive number theory, \textit{Bull. Res.
Council Israel} \textbf{10} (1961), 41–43.


\bibitem{Evans}
A. B. Evans, 
Magic rectangles and modular magic rectangles, 
\textit{J. Stat. Plann. Inference} \textbf{51} (1996), 171--180.














\bibitem{Skolem57}
T. Skolem, On certain distributions of integers in pairs with given differences, \textit{Mathematica
Scandinavica} \textbf{5} (1957), 57–68.

\bibitem{Sun-Yihui}
H. Sun, W. Yihui, 
Note on magic squares and magic cubes on Abelian groups, 
\textit{J. Math. Res. Exposition} \textbf{17(2)} (1997), 176--178.


\end{thebibliography}
\end{document}